\newcommand{\eps}{\varepsilon}
\newcommand{\dx}{\mathrm{d}x}
\newcommand{\ds}{\mathrm{d}s}
\newtheorem{theorem}{{Theorem}}
\newtheorem{lemma}[theorem]{{Lemma}}
\newcommand\vct[1]{\boldsymbol{#1}}
\newcommand{\rmd}{\mathrm{d}}
\title{An efficient diffusion generated motion method for wetting dynamics}
\author{
Song Lu \footnote{ lusong@lsec.cc.ac.cn.}
, Xianmin Xu\footnote{Corresponding author.  xmxu@lsec.cc.ac.cn.}
 \\
LSEC, ICMSEC, NCMIS,  Academy of Mathematics and Systems Science, Chinese Academy of Sciences, Beijing 100190, China; \\
 School of Mathematical Sciences, University of  Chinese Academy of Sciences, Beijing 100049, China\\
}
\date{}
\begin{document}

\begin{abstract}
By using the Onsager variational principle as an approximation tool, we develop
a new diffusion generated motion method for wetting problems. The method uses a signed distance function to
represent the interface between the liquid and vapor surface.
In each iteration, a linear diffusion equation with a linear boundary
condition is solved for one time step in addition to a simple re-distance step and a volume correction step.
The method has a {first-order} convergence rate with respect to the time step size even in the vicinity three-phase contact points.
Its energy stability property is analysed by careful studies for some geometric flows on substrates.
Numerical examples show that the method can be used to simulate  complicated wetting problems on inhomogeneous surfaces.
\end{abstract}
\maketitle


\section{Introduction}
Wetting describes how liquid stays and spreads on solid surfaces.
It is a fundamental two-phase flow problem and has important  applications in  many industrial processes, e.g. in coating,
printing, oil industry, etc. When the solid surface is homogeneous,
the contact angle between the liquid surface and the substrate is characterized
by {Young's equation\cite{Young1805}}. However, when the solid surface is
chemically inhomogeneous or geometrically rough, the wetting phenomena become
much more complicated. The apparent contact angle may be very different from {Young's} angle({so-called} the lotus effect). There also exist many meta-stable states
in the solid-liquid-vapor system{, and} this  generates  the interesting
contact angle hysteresis phenomenon.
Many theoretical and experimental studies have been done for the wetting problems(c.f. \cite{Gennes85,Bonn09,Gennes03}
and many {references} therein).

Mathematically, wetting is a free interface problem with complicated boundary conditions. A liquid drop can change shape or even topology on a solid substrate.
On the contact line, which is the line between the liquid-vapor interface and the solid surface, {Young's} equation
holds locally.
 Numerical simulation for the wetting problem is very challenging, especially for that on  rough or
 chemically inhomogeneous surfaces.
The standard methods based on a sharp-interface representation of the liquid-vapor interface are difficult
to deal with the topology change and the complicated boundary conditions  \cite{Ren2014,deckelnick2005computation,Brakke1992,womble1989front,zhao1996variational}.
On the other hand, the diffuse-interface model for the wetting problem includes
 a nonlinear relaxed boundary condition\cite{Qian:2003iz,xuwang2011}, which is also difficult
to deal with numerically. Moreover, in comparison with the phase-field model in other applications,
one needs  a very small interface thickness parameter in wetting problems,
since the parameter must be much smaller than the characteristic size of the roughness or chemical inhomogeneity
in order to simulate the wetting phenomena correctly.

Recently,  a threshold dynamics method has been developed  for wetting problems on
rough surfaces\cite{xuWangWang2016,xuWangWang2018}.
The method alternately diffuses and sharpens a linear combination of the characteristic functions of the liquid,
the vapor and the solid domains to decrease the total interfacial energy of the {three-phase} system. The method is  efficient
 since in each iteration one needs only to solve one or two linear diffusion equations
 in addition to a simple thresholding step and a volume correction step.
The diffusion equation can be solved by standard FFT method
 or {non-uniform} FFT techniques\cite{Jiang2018a,wang2019efficient}.
 However, the accuracy of the method is not very satisfactory, since
 one can only obtain a convergence rate of half order with respect to the time step $\delta t$(i.e. of order $O(\delta t^{1/2})$)
 near the three-phase contact points.  This is crucial in wetting problems since it is
 the contact angle condition at these points that determines the main physical properties of wetting.
To compute the wetting problem correctly,
one usually needs to choose {a} very small time step size and  very fine meshes,
especially for rough or chemically inhomogeneous surfaces.

The  threshold dynamic method for wetting in \cite{xuWangWang2016,xuWangWang2018} is basically {an} MBO type method\cite{merriman1992diffusion,ruuth1998efficient,ruuth1998diffusion}
and can be seen as a generalization of the method recently developed in \cite{esedoglu2015threshold}.
This  method {has} been studied a lot and also been used in many different problems \cite{ruuth2001convolution,esedog2006threshold,wang2016efficient,elsey2016threshold,osting2017generalized}. One can also {employ} {signed
distance functions} to replace the characteristic functions in some applications\cite{esedog2010diffusion,kublik2011algorithms}.
Main advantages of these methods are that they are easy to implement and  can
deal with the topological changes naturally.
A disadvantage is that the accuracy of the method is not very good for multi-phase free interface problems with
a triple junction.
In general, the convergence rate  with respect to the time step of the method is of order
$O(\delta t)$ for smooth curves while it is of order $O(\delta t^{1/2})$ when there is a triple junction\cite{zaitzeff2019voronoi}.
Some {second-order} threshold dynamics schemes have been developed for smooth curves in \cite{ruuth1998efficient,zaitzeff2020second}.
However, it is not clear if the methods work for multi-phase problems with  triple junctions.

In this paper, we develop a new diffusion generalized method using the signed distance function
for wetting problems. The method is efficient and easy to implement, since like in the standard method,
in each {iteration,} the main step is to solve a linear diffusion equation
with a linear boundary condition. {Meanwhile,} the method
achieves a {first-order} convergence rate $O(\delta t)$  with respect to the time step
even near three-phase contact points.
The main difference of the method from the previous ones in \cite{xuWangWang2016,xuWangWang2018} is that we do not include the
 solid phase domain in the diffusion equation. Therefore the three-phase  contact points
are not inside the computational domain but on the boundary
and the corresponding contact angle conditions
can be approximated with higher accuracy.

Another novelty of our work is  that we use the Onsager principle\cite{Onsager1931a,Onsager1931,DoiSoft} as an approximation tool
to derive a linear diffusion equation with a linear boundary condition for the liquid-vapor interface,
which is the main equation to be solved in the proposed numerical method.
The Onsager Principle is a fundamental variational principle in statistic physics\cite{Onsager1931,Onsager1931a,DoiSoft}.
Recently, it is found that the principle can be used as an powerful approximation tool in many problems in soft matter \cite{Doi2015,XuXianmin2016,DiYana2016,ManXingkun2016,ZhouJiajia2017,DiYana2018,guo2019onset,Jiang19b}.
We use the idea in a novel way to derive the linear equation for wetting problems.
This is done by assuming a {\it tanh} profile of the leading order approximation of a phase-field model for wetting problems and
choosing the signed distance function of the liquid-vapor interface as an unknown function. By applying the Onsager principle, we show that the signed distance function approximately satisfies a linear diffusion equation with a simple linear  boundary condition.



We  also give a stability analysis for the proposed method.
The analysis is based on an approximation of the method by some geometric flows and
 careful studies of their geometric properties. We
show that the total wetting energies {decay} when the time step is small.
Numerical examples show that the method works well for wetting problems on chemically inhomogeneous surfaces.
The contact angle hysteresis phenomena can be computed correctly. 

The rest part of the paper is organized as follows. In section 2, we  {give the Onsager principle briefly}
and the main idea to use it as an approximation tool. In section 3,
we introduce the mathematical models for wetting problems. In particular, we show that the Onsager
principle can be used to derive a modified Allen-Cahn equation with a relaxed boundary condition.
In section 4, we derive a linear diffusion equation for the signed distance function of
the liquid-vapor interface by using the Onsager principle to approximate
the modified Allen-Cahn equation. The diffusion equation is then used to construct
 a diffusion generated motion method for wetting problems.
 In section 5, we give some analysis {of} the energy stability property of our method.
In section 6, some numerical examples are given to show the method  has a {first-order} convergence rate
and works well for wetting problems on chemically inhomogeneous surfaces.
Finally, some {concluding} remarks are given in the last section.

%


\section{The Onsager Principle as an approximation tool}
The Onsager Principle is a variational principle in statistic physics\cite{Onsager1931,Onsager1931a}. It has been used to derive models in many problems in
soft matter science, such as the Stokes equation in hydrodynamics, the generalized Navier slip boundary condition
in moving contact line problems, the Nernst-Planck equation in electro-kinetics,
the Ericksen-Leslie equation in nematic liquid crystals, etc \cite{DoiSoft,Doi2011, QianTiezheng2006}. We will introduce
it briefly in this section and more details are referred to \cite{DoiSoft}.

Suppose a non-equilibrium physical system is characterized by a set of parameters $\alpha_i$, $i=1,\cdots, M$,
which may depend both on space and time.
When the inertial effect is ignored, the dynamics of the parameters can be determined
by using the Onsager principle as follows.
Firstly, define the  Rayleignian function  in the  system with respect to $\dot{\alpha}=\{\dot{\alpha}_1,\cdots,\dot \alpha_M\}$, which is the changing rate of the parameter $\alpha$, as
\begin{equation}\label{e:Rayleig}
\mathcal{R}(\alpha;\dot{\alpha})=\Phi(\dot{\alpha})+\dot{\mathcal E}(\alpha;\dot{\alpha}),
\end{equation}
where $\Phi(\dot{\alpha})$ is the energy dissipation function, which is the half of the
total energy dissipation rate in the system, and $\dot{\mathcal E}(\alpha,\dot{\alpha})$ is the changing rate of total energy $\mathcal{E}(\alpha)$.
{In general,} the energy dissipation function can be written as a quadratic form of $\dot{\alpha}$, i.e.,
$$\Phi(\dot{\alpha})=\frac{1}{2}\sum_{i,j=1}^M\zeta_{ij}\dot{\alpha}_{i}\dot{\alpha}_j,$$
where $\zeta_{ij}$ is a symmetric positive matrix. Here we ignore the spacial integration for simplicity.
Then the dynamics of $\alpha_i$, $i=1,\cdots, M$
can be determined by minimizing the total Rayleignian with respect to $\dot{\alpha}$, namely
\begin{equation}\label{e:Onsager}
\min_{\dot{\alpha}} \mathcal{R}(\alpha;\dot{\alpha}).
\end{equation}
This leads to a dynamic equation for $\alpha$,
\begin{equation}\label{e:dyn1}
\sum_{j=1}^M\zeta_{ij}\dot{\alpha}_j=-\frac{\delta \mathcal{E}}{\delta \alpha_i}.
\end{equation}
%
The equation is actually the  Euler-Lagrange equation  for \eqref{e:Onsager}.
In {the} next section, we will use the principle to derive a modified Allen-Cahn equation for {the} wetting problem.

Recently, it is found that the principle can be used as {a powerful} approximation tool\cite{Doi2015,XuXianmin2016,DiYana2016,ManXingkun2016}.
The key idea is to use Onsager principle only for a few key parameters(not the whole set $\alpha$) which {describe} the system approximately.
If the selected parameters depend only on time,
then we derive an ordinary differential equation for them.
In general, the ODE equation is much easier to solve than \eqref{e:dyn1} but still captures the main properties of the
complicated physical system. This method is very useful to approximate many free boundary problems
in two-phase flows, visco-elastic fluids, etc \cite{XuXianmin2016,DiYana2016,DiYana2018,guo2019onset}.
In section 4, we will use the idea to derive an efficient numerical method for wetting problems.

\section{Mathematical models of the wetting problem}
\label{sec:model}
\begin{figure}[!h]
  \centering
  \includegraphics[width=6cm]{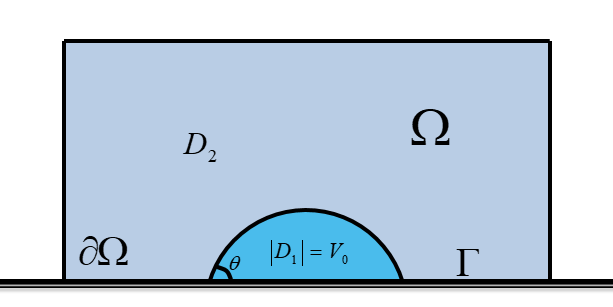}\qquad
  \includegraphics[width=6cm]{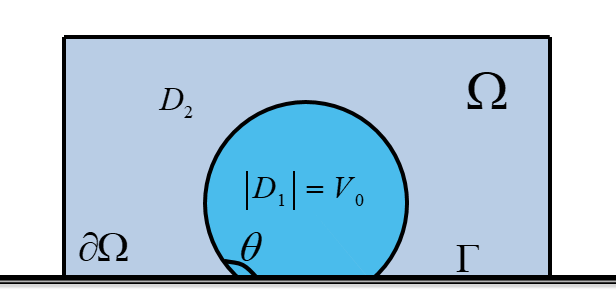}
  \caption{Wetting phenomena: a droplet on a hydrophilic substrate(left) and that on a  hydrophobic one(right)}\label{wetting}
\end{figure}
\subsection{The sharp-interface model}
   In wetting problems, if one ignores the gravity,
  the total energy of a liquid-vapor-solid system is composed of three interface energies, i.e.
 \begin{equation}\label{e:totalEng}
 \mathcal{E}=\gamma_{LV}|\Sigma_{LV}| +\gamma_{SL}|\Sigma_{SL}|+\gamma_{SV}|\Sigma_{SV}|.
 \end{equation}
 where $\gamma_{LV}$, $\gamma_{SL}$ and $\gamma_{SV}$ are respectively
 the energy densities of the liquid-vapor interface $\Sigma_{LV}$, the solid-liquid interface $\Sigma_{SL}$ and the solid-vapor
 interface $\Sigma_{SV}$.  The stationary profile of a liquid droplet on a solid surface is determined
 by minimizing the total energy under the constraint that the volume of the droplet is fixed.

To specify the mathematical model, we  suppose the liquid domain is $D_1\subset\Omega$ and the vapor domain is $D_2=\Omega\setminus D_1$, as shown in Figure~\ref{wetting}. The solid surface  $\Gamma$  is the lower boundary of $\Omega$.
The liquid-vapor interface is  given by $\Sigma_{LV}=\partial D_1\cap\partial D_2$,
 the solid-liquid interface is  $\Sigma_{SL}=\partial D_1\cap \Gamma$ and the solid-vapor
 interface is  $\Sigma_{SV}=\partial D_2\cap\Gamma$.  Then the mathematical problem of wetting is
 \begin{equation}\label{e:sharpPb0}
\min_{D_1\cup D_2=\Omega, |D_1|=V_0}  \mathcal{E}(D_1,D_2):=\gamma_{LV}|\partial D_1\cap\partial D_2| +\int_{\partial D_1\cap \Gamma} \gamma_{SL}(s) \ds +\int_{\partial D_2\cap\Gamma}\gamma_{SV}(s) \ds.
 \end{equation}
Here we write the surface energy on the solid boundary $\Gamma$ into an integral since the energy densities $\gamma_{SL}$ and $\gamma_{SV}$
might not be constant when the solid surface is chemically inhomogeneous.

When the solid surface is planar and homogeneous, the energy minimizing problem \eqref{e:sharpPb0} has
a unique minimizer corresponding to a spherical droplet with a contact angle, which is the angle between the liquid-vapor interface
and the solid surface, given by {Young's} equation\cite{Young1805}:
\begin{equation}\label{e:Young}
\gamma_{LV}\cos\theta_Y=\gamma_{SV}-\gamma_{SL}.
\end{equation}
When the solid substrate is geometrically rough or chemically inhomogeneous, there are many local minimizers
for the problem \eqref{e:sharpPb0}. The corresponding Euler-Lagrange equation
is a free boundary problem with multi-scale boundary conditions, in which the microscopic
inhomogeneity  affects the apparent contact angle dramatically.
Both  analysis and numerical simulations are difficult for the wetting problem on a rough or chemically inhomogeneous surface.

The problem~\eqref{e:sharpPb0} can be rewritten into an equivalent form.  Denote $C_0=\frac{1}{2}\int_{\Gamma}{\gamma_{SL}(s)+\gamma_{SV}(s)}\ds$ as a constant depending only on the property of the solid boundary $\Gamma$.  The total wetting energy is rewritten
as
 \begin{align}
 \mathcal{E}(D_1,D_2)&=\gamma_{LV}|\partial D_1\cap\partial D_2| +\int_{\partial D_1\cap \Gamma} \frac{\gamma_{SL}(s)-\gamma_{SV}(s)}{2} \ds +\int_{\partial D_2\cap\Gamma}\frac{\gamma_{SV}(s)-\gamma_{SL}(s)}{2} \ds+C_0\nonumber\\
 &=\gamma_{LV}|\partial D_1\cap\partial D_2| -\frac{\gamma_{LV}}{2} \int_{\partial D_1\cap \Gamma} \cos\theta_Y(s)\ds +\frac{\gamma_{LV}}{2}\int_{\partial D_2\cap\Gamma}\cos\theta_Y(s) \ds+C_0.\label{e:sharpEng1}
 \end{align}
 Here we have used {Young's} equation~\eqref{e:Young} and regard $\theta_Y(s)$ as a property of the substrate.
 Then the problem~\eqref{e:sharpPb0} is equivalent to the following one
\begin{equation}\label{e:sharpPb1}
\min_{D_1\cup D_2=\Omega, |D_1|=V_0}  \tilde{\mathcal{E}}(D_1,D_2):=\sigma|\partial D_1\cap\partial D_2| -\frac{\sigma}{2} \int_{\partial D_1\cap \Gamma} \cos\theta_Y(s)\ds +\frac{\sigma}{2}\int_{\partial D_2\cap\Gamma}\cos\theta_Y(s) \ds,
\end{equation}
where $\sigma$ is a given positive parameter, which can be seen as a dimensionless surface tension.

\subsection{The phase-field model}
Since the sharp-interface model described above is difficult in analysis and numerical simulations, it
is convenient to consider a phase-field approximation for the problem~\eqref{e:sharpPb1}.
The idea is to use a smooth phase field function $\phi_{\eps}$ to approximate the
liquid-vapor interface $\partial D_1\cap\partial D_2$ \cite{modica1977limite}. The total interface energy in \eqref{e:sharpPb1} can be approximated by
\begin{equation}\label{e:diffuEng}
\mathcal{E}_{\eps}(\phi_\eps)=\int_{\Omega}\frac{\eps}{2}|\nabla \phi_\eps|^2 +\frac{1}{\eps}f(\phi_{\eps}) \dx+\int_{\Gamma}
g(\phi_\eps) \ds,
\end{equation}
where $f(\phi_{\eps})=\frac{(1-\phi_{\eps}^2)^2}{4}$ and $g(\phi_\eps)=-\frac{\sigma}{4}\cos\theta_Y(s)(3\phi_{\eps}-\phi_{\eps}^3)$
with $\sigma=\frac{2\sqrt{2}}{3}$. The phase-field model for the stationary wetting problem is
\begin{equation}\label{e:phasefieldPb0}
\inf_{\int_{\Omega}\phi_\eps dx=C_1} \mathcal{E}_{\eps}(\phi_\eps).
\end{equation}
Here $C_1$ is a given constant to make sure the volume of the liquid domain $|\{x|\phi_\eps(x)>0\}|=V_0$.
It has been shown that the energy functional $\mathcal{E}_{\eps}$ $\Gamma$-converges to
the sharp-interface energy $\tilde{\mathcal{E}}$ defined in \eqref{e:sharpPb1} when $\eps$ goes to zero \cite{modica1987gradient,xuwang2011}.
Hereinafter we use $\phi$ instead of $\phi_{\eps}$ for simplicity in notations.


\subsection{Derivation of a modified Allen-Cahn equation by the Onsager principle}
We will use the Onsager principle to derive a gradient flow equation for the phase-field model~\eqref{e:phasefieldPb0}.
For that purpose, we define the energy dissipation function, which is the half of total energy dissipation rate,  as follows
\begin{align}\label{e:dissip}
\Phi=\frac{1}{2}\int_{\Omega}\dot{\phi}^2 \dx+\frac{\xi}{2}\int_{\Gamma}\dot{\phi}^2 \ds.
\end{align}
Here $\xi\geq 0$ is a phenomenological  parameter and we use $\dot{\phi}=\partial_t\phi$ for simplicity in notation.
The function $\Phi$ includes the dissipation in the bulk domain and also that on the boundary when $\xi>0$.
The Rayleighian is given by
\begin{equation}\label{e:Rayleign}
\mathcal{R}(\phi;\dot{\phi})=\Phi(\dot{\phi})+\dot{\mathcal{E}}_{\eps}(\phi;\dot{\phi}),
\end{equation}
 where $\dot{\mathcal{E}}_{\eps}$ is the changing rate of the total potential energy $\mathcal{E}_\eps$. By integration by part, we have
 \begin{equation}\label{e:rateEng}
\dot{\mathcal{E}}_{\eps}(\phi;\dot{\phi})=\int_{\Omega}(-\eps\Delta\phi+\eps^{-1} f'(\phi))\dot{\phi} \dx
+ \int_{\Gamma} (\eps \partial_n \phi+g'(\phi))\dot{\phi}
\ds + \int_{\partial\Omega\setminus\Gamma} \eps (\partial_n \phi)\dot{\phi}
\ds.
\end{equation}

By the Onsager principle, a dynamic equation for $\phi$ can {be} derived by minimizing the Rayleighian $\mathcal{R}(\phi;\dot{\phi})$ with respect to $\dot{\phi}$
under the constraint that $\int_{\Omega} \phi \dx =C_1$. Notice that this condition is equivalent to
$\int_{\Omega}\dot \phi \dx=0$ since the initial value  $\phi(x,0)$ satisfies $\int_{\Omega} \phi(x,0) \dx =C_1$. Therefore, the dynamic equation
is determined by the variational problem
\begin{equation}\label{e:Onsager0}
\min_{\int_{\Omega}\dot \phi \dx=0}\mathcal{R}(\phi;\dot{\phi}).
\end{equation}
  By introducing a Lagrangian multiplier $\lambda$,
we obtain a modified functional
$$\mathcal{R}_{\lambda}=\mathcal{R}-\lambda\int_{\Omega}\dot \phi \dx= \Phi+\dot{\mathcal{E}}_{\eps}-\lambda\int_{\Omega}\dot \phi \dx.$$
Direct computations for the Fr\'{e}chet derivative with respect to $\dot{\phi}$ give
\begin{align*}
\langle \frac{\delta \mathcal{R}_{\lambda}}{\delta\dot \phi},\psi \rangle &=\int_{\Omega}(\dot{\phi}-\eps\Delta\phi+\eps^{-1} f'(\phi)-\lambda)\psi \dx
+ \int_{\Gamma} (\xi\dot{\phi}+\eps \partial_n \phi+g'(\phi))\psi   \ds+ \int_{\partial \Omega\setminus\Gamma}  \eps(\partial_n \phi) \psi   \ds.
\end{align*}
The  Euler-Lagrange equation of \eqref{e:Onsager0}  is given by
\begin{equation}
\label{e:EL}
\left\{
\begin{array}{l}
\langle\frac{\delta \mathcal{R}_{\lambda}}{\delta\dot \phi},\psi \rangle=0,  \qquad\qquad \forall \psi;\\
\frac{\delta \mathcal{R}_{\lambda}}{\delta\lambda}=0;
\end{array}
\right.
\end{equation}
which leads to a modified Allen-Cahn equation,
\begin{equation}\label{e:AC}
\left\{
\begin{array}{ll}
\partial_t \phi -\eps\Delta\phi +\eps^{-1} f'(\phi)=\lambda, & \hbox{in }\Omega; \\
\xi\partial_t\phi+\eps\partial_n\phi +g'(\phi)=0, &\hbox{on }\Gamma;\\
\partial_n\phi =0, &\hbox{on }\partial\Omega\setminus\Gamma;\\
\int_{\Omega}\phi dx=C_1.&
\end{array}
\right.
\end{equation}
Here 
$f'(\phi)=\phi^3-\phi$ and $g'(\phi)={-\frac{3\sigma}{4}}\cos\theta_Y(1-\phi^2)=-\frac{\sqrt{2}}{2}(1-\phi^2)\cos\theta_Y$. Notice that when $\xi>0$, the second equation in \eqref{e:AC} is a dynamic relaxed boundary condition as in \cite{Qian:2003iz,chenWangXu2014}.
When $\xi=0$, it is a standard Neumann boundary condition.

By the above derivation, we can easily see the energy decay property of the solution of the Allen-Cahn equation~\eqref{e:AC}.
Suppose that $\phi$ is the solution of the modified   Allen-Cahn equation~\eqref{e:AC}.
By the equations~\eqref{e:AC}, \eqref{e:rateEng}, \eqref{e:EL} and $\int_{\Omega}\dot{\phi}dx=0$, we have
\begin{align*}
\frac{d \mathcal{E}_{\eps}}{d t}&=\int_{\Omega}(-\eps\Delta\phi+\eps^{-1} f'(\phi))\dot{\phi} \dx
+ \int_{\partial \Omega} (\eps \partial_n \phi+g'(\phi))\dot{\phi}  ds+ \int_{\partial \Omega\setminus\Gamma}  \eps(\partial_n \phi) \dot{\phi}   \ds\\
&=\int_{\Omega}(-\dot{\phi}+\lambda)\dot{\phi} \dx-\xi \int_{\Gamma}\dot{\phi}^2 \ds =-\int_{\Omega}\dot{\phi}^2 \dx-\xi \int_{\Gamma}\dot{\phi}^2 \ds\leq 0.
\end{align*}
Therefore, when $t$ goes to infinity, the stationary solution of the Allen-Cahn equation corresponds to a minimizer of the problem \eqref{e:phasefieldPb0}.

The modified equation~\eqref{e:AC} is more difficult to solve numerically than the standard Allen-Cahn equation, since
 we have a nonlinear boundary condition on the solid boundary in the second equation of  \eqref{e:AC}.
  The boundary condition implies that
the contact angle of the liquid  is relaxed to or equal to the local  Young's  angle. In addition, the last equation of \eqref{e:AC} provides
a non-local constraint for the phase-field function. 
\section{A diffusion generated motion method}
To obtain the equilibrium state of the wetting problem, one can solve the modified Allen-Cahn equation  \eqref{e:AC}  by some standard numerical methods(e.g. \cite{shen2010numerical,du2020phase}).
However, when the parameter $\eps$ is small, the equation is very difficult to solve.
Here we will not solve the equation directly.
Instead, we will use it to develop an efficient diffusion generated motion method
for the wetting problem~\eqref{e:sharpPb1}.
The key idea is to use the Onsager Principle as a tool to approximate the Allen-Cahn equation.
\subsection{Approximation of an Allen-Cahn equation by using the Onsager Principle.}

In this subsection, we  ignore the non-local constraint $\int_{\Omega}\phi dx=C_1$ in the phase field equation~\eqref{e:AC}.
The constraint comes from the volume conservation condition of the  wetting problem~\eqref{e:sharpPb1} and will
be considered in next subsection.
We  consider an Allen-Cahn equation
\begin{equation}\label{e:AC1}
\left\{
\begin{array}{ll}
\partial_t \phi -\eps\Delta\phi +\eps^{-1} f'(\phi)=0, & \hbox{in }\Omega; \\
\xi\partial_t\phi+\eps\partial_n\phi +g'(\phi)=0, &\hbox{on }\Gamma;\\
\partial_n \phi=0, & \hbox{on }\partial\Omega \setminus \Gamma.
\end{array}
\right.
\end{equation}
Similar to the analysis in the previous section, it is easy to see
that the equation \eqref{e:AC1} can be derived by using the Onsager principle
for the Reighleignian \eqref{e:Rayleign}.
In the following, we will use the Onsager principle to do some approximations for the equation \eqref{e:AC1}.

Since $\eps$ is  a small parameter, standard asymptotic analysis (c.f. \cite{caginalp1988dynamics,xuwang2011}) implies that the solution of \eqref{e:AC1} can be expanded with respect to $\eps$ as
\begin{equation*}
\phi(x,t)=\phi_0(x,t)+\eps \phi_1(x,t)+\cdots,
\end{equation*}
where  $\phi_0(x,t)=\tanh(\frac{d(x,t)}{\sqrt{2}\eps})$ and $d(x,t)$ is a  signed distance function
with respect to the zero level set  of $\phi_0(x,t)$ such that $d(x,t)>0$ when $\phi_0(x,t)>0$.
When we are interested only in the leading order approximation of $\phi$, we can assume
\begin{equation}
\label{e:expansion0}
\phi(x,t) \approx \phi_0(x,t)=\Psi \Big (\frac{d(x,t)}{\sqrt{2}\eps} \Big):=\tanh\Big(\frac{d(x,t)}{\sqrt{2}\eps}\Big).
\end{equation}
In this approximation, the only unknown is  the signed distance function $d(x,t)$
with respect to the zero level set of $\phi_0$.

We now derive a dynamic equation for the signed distance function $d(x,t)$ by using the Onsager Principle.
It is easy to see that $\Psi'(\cdot)=1-\Psi^2(\cdot)$. Direct calculations give
\begin{align*}
\nabla \phi &\approx \frac{1}{\sqrt{2}\eps}\Psi'\Big(\frac{d(x,t)}{\sqrt{2}\eps}\Big)\nabla d= \frac{1}{\sqrt{2}\eps}\Big(1-\Psi^2\Big(\frac{d(x,t)}{\sqrt{2}\eps}\Big)\Big) \nabla d= \frac{1}{\sqrt{2}\eps}(1-\phi_0^2) \nabla d, \\
\partial_t \phi &\approx \frac{1}{\sqrt{2}\eps}\Psi'\Big(\frac{d(x,t)}{\sqrt{2}\eps}\Big)\partial_t d= \frac{1}{\sqrt{2}\eps}
\Big(1-\Psi^2\Big(\frac{d(x,t)}{\sqrt{2}\eps}\Big)\Big) \partial_t d= \frac{1}{\sqrt{2}\eps}(1-\phi_0^2) \partial_t d.
\end{align*}
Combing the above approximations  with \eqref{e:expansion0}, the energy dissipation function (defined in \eqref{e:dissip}) in the system can be calculated  as
\begin{align}
\Phi &\approx \frac{1}{4\eps^{2}}\int_{\Omega}(1-\phi_0^2) ^2 (\partial_t d)^2 \dx+\frac{\xi}{4\eps^{2}}\int_{\Gamma}(1-\phi_0^2) ^2  (\partial_t d)^2 \ds\\
&=\frac{1}{\eps^2}\int_{\Omega} f(\phi_0) (\partial_t d)^2 \dx+\frac{\xi}{\eps^2}\int_{\Gamma} f(\phi_0) {(\partial_t d)}^2 \ds.
\end{align}
Similar calculations for the total free energy(defined in \eqref{e:diffuEng}) lead to
\begin{equation}
\mathcal{E}_{\eps} \approx \int_{\Omega}\frac{1}{\eps} f(\phi_0)  (|\nabla d(x,t) |^2 +1) \dx
+\int_{\Gamma} g(\phi_0) \ds.
\end{equation}
The time derivative of $\mathcal{E}_\eps$ is calculated by
\begin{align*}
\dot{\mathcal{E}}_{\eps} \approx &\int_{\Omega}\frac{2}{\eps} f(\phi_0)\nabla d \cdot \nabla \partial_t d   +\frac{(1+|\nabla d|^2)}{\eps}  f'(\phi_0)\Psi'\Big(\frac{d(x,t)}{\sqrt{2}\eps}\Big)  \frac{\partial_t d}{\sqrt{2}\eps}\dx+\int_{\Gamma} g'(\phi_0)\Psi'\Big(\frac{d(x,t)}{\sqrt{2}\eps}\Big)  \frac{\partial_t d}{\sqrt{2}\eps}\ds\\
=&\int_{\Omega}-\frac{2}{\eps} f(\phi_0 )\Delta d \partial_t d    +\frac{(1-|\nabla d|^2)}{\eps}  f'(\phi_0)\Psi'\Big(\frac{d(x,t)}{\sqrt{2}\eps}\Big)  \frac{\partial_t d}{\sqrt{2}\eps}\dx+ \int_{\partial\Omega\setminus\Gamma} \frac{2}{\eps} f(\phi_0)\partial_n d\partial_t d \ds \\
&+\int_{\Gamma}\frac{2}{\eps} f(\phi_0)\partial_n d\partial_t d + g'(\phi_0)(1-\phi_0^2)  \frac{\partial_t d}{\sqrt{2}\eps}\ds\\
=&-\int_{\Omega}\frac{2}{\eps} f(\phi_0) \Delta d \partial_t d \,  \dx+\int_{\Gamma}\frac{2}{\eps} f(\phi_0) (\partial_n d -\cos\theta_Y) {\partial_t d}\, \ds + \int_{\partial\Omega\setminus\Gamma} \frac{2}{\eps} f(\phi_0)\partial_n d\partial_t d \ds\\
\approx&-\int_{\Omega}\frac{2}{\eps} f(\phi_0) \Delta d \partial_t d \,  \dx+\int_{\Gamma}\frac{2}{\eps} f(\phi_0) (\partial_n d -\cos\theta_Y) {\partial_t d}\, \ds.
\end{align*}
In the derivations, we have used integration by part in the second equation, and  used
the facts that $|\nabla d|=1$,  $g'(\phi_0)={-\frac{\sqrt{2}}{2}\cos\theta_Y(1-\phi_{0}^2)}$
and $f(\phi_0)=\frac{(1-\phi_{0}^2)^2}{4}$ in the third equation. In the last equation, we ignore the integral term on $\partial\Omega\setminus\Gamma$, since $f(\phi_0)\approx f(-1)=0$ on  $\partial\Omega\setminus\Gamma$.
This can be seen from the equation \eqref{e:expansion0} when the liquid-vapor interface $\{x|\phi_0(x,t)=0\}$ is far from the
boundary $\partial\Omega\setminus \Gamma$.

The Rayleighian is  approximated by
\begin{align*}
\mathcal{R}&=\Phi+\dot{\mathcal{E}}_{\eps}\\
&\approx
\frac{1}{\eps^2}\int_{\Omega} f(\phi_0) \big((\partial_t d)^2-2\eps \Delta d \partial_t d\big) \dx +\frac{1}{\eps^2}\int_{\Gamma} f(\phi_0) (
{\xi}{\partial_t d} +2\eps(\partial_n d -\cos\theta_Y)) {\partial_t d}\, \ds.
\end{align*}
It is easy to see that Fr\'{e}chet derivative of $\mathcal{R}$ with respect to $\partial_t d$ is given by
\begin{align*}
\langle \frac{\delta \mathcal{R}}{\delta (\partial_t d)},\psi\rangle\approx &\int_{\Omega} \frac{2}{\eps^2} f(\phi_0)(\partial_t d- \eps\Delta d)\psi \,\dx +\int_{\Gamma} \frac{2}{\eps^2} f(\phi_0) (\xi\partial_t d+\eps \partial_n d -\eps\cos\theta_Y)\psi\, \ds
\end{align*}
By using the Onsager principle,
the dynamic equation for $d$ is obtained by minimizing the approximated Rayleighian with respect to $\partial_t d$.
This leads to a linear heat equation for $d$,
\begin{equation}\label{e:PDE1n}
\left\{
\begin{array}{ll}
\partial_t d -\eps\Delta d =0, & \hbox{in }\Omega; \\
\xi \partial_t d+\eps (\partial_n d -\cos\theta_Y)=0, &\hbox{on }\Gamma.\\
\end{array}
\right.
\end{equation}
We can see that when $\xi=0$, the  condition on $\Gamma$ is a standard Neumann boundary condition;
and when $\xi>0$, it is a linear dynamic boundary condition.

The equation \eqref{e:PDE1n} is much easier to solve than  \eqref{e:AC1} since it is a linear
equation with a linear boundary condition. However, the property
$|\nabla d|=1$ is not preserved by the solution of the heat equation~\eqref{e:PDE1n}. Therefore,
 $\tanh(\frac{d(x,t)}{\sqrt{2}\eps})$ is a good approximation
for the solution of \eqref{e:AC1}  only when $t$ is small. Nevertheless, it is enough for us
to derive a numerical scheme if we re-distance the solution of \eqref{e:PDE1n}
every time step.

\subsection{The numerical scheme}
We will construct a numerical method for  the wetting problem~\eqref{e:sharpPb1}. It includes the following three main steps:
 first to solve the linear diffusion equation \eqref{e:PDE1n}, then to re-distance its solution, thirdly to correct the volume of
 the liquid domain. More details
are described as follows.

{\it Solving a heat equation.} Given a signed distance function $d_k$, suppose that
 the liquid phase is given by $D_1^{k}=\{x:d_k(x)>0\}$ and such that $|D_1^{k}|=V_0$. We first solve
the following  heat equation until  time $\delta t$:
\begin{equation}\label{e:PDE1}
\left\{
\begin{array}{ll}
\partial_t \varphi -\Delta \varphi =0, & \hbox{in }\Omega; \\
\xi\partial_t \varphi+\partial_n \varphi =\cos\theta_Y, &\hbox{on }\Gamma;\\
\varphi(x,0)= d_k(x).
\end{array}
\right.
\end{equation}
The equation is equivalent to \eqref{e:PDE1n} after a  time scaling.
 In the above equation,
we have not fixed the  condition for $\varphi$ on the boundary $\partial \Omega\setminus\Gamma$. Since we assume
the boundary is far from the liquid-vapor interface, we could choose the boundary condition
freely. For example, we can choose a Dirichlet boundary condition $\varphi =d_k(x)$ or a Neumann boundary
condition $\partial_n \varphi =0$. In our simulations, we simply set $\partial_n \varphi =0$ on $\partial \Omega\setminus\Gamma$.

Since the equation \eqref{e:PDE1} is a linear diffusion equation, many standard numerical methods can be used.
In our simulations, 
we use the backward Euler scheme to discretize the time derivative  that $\partial_t \varphi=\frac{\varphi^{n+1}-\varphi^{n}}{\delta t}$,
where $\delta t$ is the time step size.
For the Laplace operator, we adopt the standard five point finite difference scheme to discretize it.
Namely, for a uniform partition of $\Omega=(x_0,x_1)\times(y_0,y_1)$ with mesh size $h$, for the grid points inside $\Omega$, we set
\begin{equation*}
\frac{\varphi^{n+1}_{i,j}-\varphi^{n}_{i,j}}{\delta t}-\frac{\varphi^{n+1}_{i-1,j}+\varphi^{n+1}_{i+1,j} +  \varphi^{n+1}_{i,j+1} + \varphi^{n+1}_{i,j-1}-4  \varphi^{n+1}_{i,j}}{h^2}=0.
\end{equation*}
On the lower boundary $\Gamma$, we discretize a dynamic boundary condition as follows,
\begin{equation*}
(1+\frac{2\xi}{h})\frac{\varphi^{n+1}_{i,1}-\varphi^{n}_{i,1}}{\delta t}-\frac{\varphi^{n+1}_{i-1,1}+\varphi^{n+1}_{i+1,1} + 2 \varphi^{n+1}_{i,2} -4  \varphi^{n+1}_{i,1}}{h^2}=\frac{2\cos\theta_Y}{h}.
\end{equation*}
It is a combination of the discretization of the linear diffusion equation and  the discretization of dynamic relaxed boundary conditions.
When $\xi=0$, this equation is reduced to the standard finite difference discretization of the Neumann boundary condition.

For simplicity in notations, we denote the process of solving the heat equation \eqref{e:PDE1}  by
\begin{equation*}
\varphi(x,\delta t)=\mathrm{EquSolver}(d_k,\delta t).
\end{equation*}

{\it The re-distance of the function $\varphi$.}
Since the solution $\varphi(x,\delta t)$ is not a signed distance function, we {need to} transform it into  a signed
distance function while keeping its zero level set unchanged.
Denote by $\tilde{d}_{k+1}$ the signed distance function with respect to $\varphi(x,\delta t)$, satisfying
\begin{equation*}
\nabla \tilde{d}_{k+1}=\frac{\nabla\varphi(x,\delta t)}{|\nabla\varphi(x,\delta t)|}
\end{equation*}
on the interface.
Computing $\tilde{d}_{k+1}$ from $\varphi(x,\delta t)$ is the standard re-distance process in
the level-set method.  Many
efficient algorithms have been developed(e.g. \cite{sethian1999level,russo2000remark,osher2001level,cheng2008redistancing,elsey2014fast}). Here we adopt the method based on a fast marching technique \cite{sethian1999level}.
We denote the re-distance process by
\begin{equation*}
  \tilde{d}_{k+1} = \text{Redist}(\varphi(x,\delta t)).
\end{equation*}

{\it Correction of the volume.}
Finally, notice that the volume of the domain surrounded by the zero level-set of $\tilde{d}_{k+1}$ is not
 equal to the initial value $V_0$. To preserve the volume of the liquid domain, we {need to} make
some correction for $\tilde{d}_{k+1}$ as in the {volume-preserving} threshold dynamics method \cite{ruuth2003simple}.
We first search for a constant $\delta^*$ such that the domain $|\{x|\tilde{d}_{k+1}(x)>\delta^*\}|=V_0$,
and then  set
$$d_{k+1}(x)=\tilde{d}_{k+1}(x)-\delta^*.$$
Generally speaking, to find a constant $\delta^*$ efficiently is not easy\cite{ruuth2003simple,kublik2011algorithms,Svadlenka14}.
In our numerical simulations, we use a simple bisection method. For that purpose, we must have
an interval containing $\delta^*$. For that purpose, we first  find an estimate of
$\delta^*$ using the method of counting the grid number as in \cite{xuWangWang2016}. Suppose
we have a uniform partition for $\Omega$ with mesh size $h$.
We sort the values $\tilde d_{k+1}(x_{ij})$ on the grid points into {a decreasing array} $\mathcal S$.
Then $\delta^*$ can be approximated roughly by a value $\hat{\delta}=(\mathcal{S}(M)+\mathcal{S}(M+1))/2$, where
$M=\lfloor V_0/h^2\rfloor$, the integer part of $V_0/h^2$.
We then easily extend it into a small interval  which contains $\delta^*$.


Repeat the above three steps iteratively.
We get a series of signed distance function $d_k$, $k=0,1,\cdots$. Each $d_k$ corresponds to
a liquid domain $D^k_1$ satisfying $|D^k_1|=V_0$. We expect that $\{D^k_1,D^k_2\}$ with $D^k_2:=\Omega\setminus D_1^k$
gives an energy decreasing sequence  for the problem~\eqref{e:sharpPb1}. (The property will be studied in next section.)
This leads to the following diffusion generated motion algorithm for the wetting problem.

\begin{algorithm}[H]
\textbf{Algorithm 1}
\begin{algorithmic}
\State \textbf{Step 0}. Give an initial signed distance function $d_0$ such that $|\{x|{d}_0(x)>0\}|=V_0$, a {constant tolerance} $TOL$, and set $k=0$.
\State \textbf{Step 1}.  Solve the heat equation \eqref{e:PDE1} with initial function $d_k$ until a time $\delta t$   (only one implicit Euler step) to obtain
\begin{equation}\label{ss1}
\varphi(x,\delta t)=\mathrm{EquSolver}(d_k,\delta t).
\end{equation}
\State \textbf{Step 2}. Construct the signed distance function $\tilde{d}_{k+1}$ from $\varphi(x,\delta t)$\\
\begin{equation}\label{ss2}
  \tilde{d}_{k+1} = \text{Redist}(\varphi(x,\delta t)).
\end{equation}
\State \textbf{Step 3}. Find a constant $\delta^*$ such that the domain $|\{x|\tilde{d}_k(x)>\delta^*\}|=V_0$.
Set
$$d_{k+1}(x)=\tilde{d}_{k+1}(x)-\delta^*.$$
\State \textbf{Step 4}. If $\|d_{k+1}-d_{k}\|<TOL$, stop. Otherwise, set $k=k+1$ and go back to \textbf{Step 1}.
\end{algorithmic}
\end{algorithm}\label{Algorithm1}

The algorithm is much more efficient
than directly solving the modified Allen-Cahn equation \eqref{e:AC}, since we avoid to resolve the
very  thin inner layer induced by a small parameter $\eps$ and also avoid the nonlinear boundary condition on $\Gamma$.
Notice that the parameter $\xi$ in the dynamic boundary condition in \eqref{e:PDE1} can be regarded as an adjustable
parameter when we are only interested in the stationary state of wetting problems.
Finally, if the boundary condition is replaced by a periodic boundary condition, the method is
reduced to the standard  method in \cite{esedog2010diffusion} for mean curvature flows.
Here the main difference is that we use a dynamic boundary condition, which is used
to describe the contact angle condition in wetting problems.

\section{Energy stability analysis}
We  study the energy stability property for Algorithm 1 in this section.
As mentioned above, for any given $d_0$, which corresponds to a liquid domain $D^0_1=\{x|d_0(x)>0\}$,
the algorithm generates a sequence of $d_k$. Each $d_k$ corresponds to a liquid domain $D^k_1=\{x|d_k(x)>0\}$.
This implies that Algorithm 1 generates a (discrete) evolution of a liquid domain.
We will prove the total wetting energy \eqref{e:sharpEng1} corresponding to the liquid domain $D^k_1$(by setting $D^k_2=\Omega\setminus D_1^k$) decays
when $k$ increases.
For that purpose, we will first reformulate the main processes in Algorithm 1 into some geometric flows
for the surface of the liquid domain. Then we will prove some energy decay property for the geometric flows.
Hereinafter we restrict our analysis in two dimensions. For simplicity, we assume the liquid domain
$D_1^k$ is simply connected and we consider only the case $\xi=0$ in \eqref{e:PDE1}, which corresponds to a standard Neumann boundary condition:
\begin{equation}\label{e:Neumann}
\partial_n\varphi =\cos\theta_Y.
\end{equation}

\subsection{The approximate geometric flows for Algorithm 1}
We first consider Step 1 in Algorithm 1. When $\delta t$ is small and $\xi=0$, the zero level set of the solution of the linear diffusion equation \eqref{e:PDE1} approximates a mean curvature flow problem starting from
a curve $\{ x\in \Omega\, |\, d_k(x)=0\}$ with the contact angle being given by {Young's} angle $\theta_Y$.
Here a mean curvature flow  denotes a geometric flow for a curve which evolves with the normal velocity
equal to its local (mean) curvature.
When the zero level set of $\varphi$ has no intersection with the solid boundary $\Gamma$,
the relation between the solution of the heat equation and the mean curvature flow has been shown in \cite{esedog2010diffusion}. The approximation error is of order $O(\delta t^2)$(c.f. Equation (66)-(67) in \cite{esedog2010diffusion}).
The analysis still works for \eqref{e:PDE1} for the inner points on the zero level set of $\varphi(x,\delta t)$.
We need only to study the contact angle  condition. On
the zero level set of $\varphi(x,\delta t)$, its normal direction is given by $\frac{\nabla \varphi}{|\nabla \varphi|}$.
The contact angle $\theta $ between the zero level set of $\varphi(x,\delta t)$ and the solid boundary $\Gamma$ satisfies
\begin{equation}\label{e:apprAng}
\cos\theta=\frac{\nabla \varphi}{|\nabla \varphi|}\cdot\mathbf{n}=\frac{1}{|\nabla\varphi|}\partial_n\varphi=\frac{1}{|\nabla\varphi|}\cos\theta_Y.
\end{equation}
Here we have used the boundary condition for $\varphi$ on $\Gamma$.
The analysis(c.f. Equation (30) in \cite{esedog2010diffusion}) shows that
$$|\nabla\varphi(x,\delta t)|\approx|\nabla \varphi(x,0)|+O(\delta t)=1+O(\delta t),$$
in the vicinity of the zero level set of $\varphi$. Therefore \eqref{e:apprAng} implies that
$\theta\approx\theta_Y+O(\delta t)$.
In all, Step 1 of Algorithm 1 leads to a mean curvature flow
for the surface of the liquid domain with a contact angle  $\theta\approx \theta_Y$.

For Step 2 in Algorithm 1, i.e. the re-distance step, the zero level-set of $\tilde{d}_{k+1}$ is the same as that for $\varphi(x,\delta t)$.
 This implies {the} liquid domain is unchanged in this step.
Then we need only further consider Step 3 (the volume correction step) in Algorithm 1.
Since $\tilde{d}_{k+1}$ is a signed distance function,
it is easy to see that the volume correction step corresponds to a motion of the zero level-set of  $\tilde{d}_{k+1}$ with a constant normal velocity. More precisely, the zero level-set of $d_{k+1}$ is obtained by moving the zero level-set of $\tilde{d}_{k+1}$ in a constant normal velocity equal to $-1$ until a time $\delta^*$.

By the above analysis, one iteration of Algorithm 1 can be approximated by two geometric flows (a mean curvature flow and a constant normal velocity flow) for the surface of a liquid domain. For convenience in later analysis,
we  characterize the two geometric flows more precisely as follows.
\begin{itemize}
\item {\bf Process 1.} At $t=t_0=0$, $d_k$ gives a  droplet with a free boundary curve $\mathcal{L}_0$(as shown in Figure~\ref{curve_bc}).
The boundary $\mathcal{L}(t):=\{\vct X(s,t): 0\leq s \leq L(t) \}$ of the droplet evolves under a mean curvature flow in the sense that $v_n=\kappa$ and the
boundary condition that $\theta_0=\theta_1=\theta_Y$. Here $s$ is the arc-length parameter, $L(t)$ is the length
of the curve $\mathcal{L}(t)$ at time $t$, and $\theta_i$ with $i=0,1$ are  the contact angles
at the two ends of $\mathcal{L}(t)$.
This process ends until $t_1=t_0+\delta t$ and then we denote $\mathcal{L}_1=\mathcal{L}( t_1)$.
In this process, the volume of the droplet is denoted by $A(t)$ which decreases with time.  We set $A_0=A(t_0)=V_0$ and $A_1=A(t_1)$.
\item {\bf Process 2.} Starting from $\mathcal{L}_1$ at $t=t_1$, the boundary $\hat{\mathcal{L}}(t):=\{\hat{\vct X}(s,t): 0<s<\hat{L}(t)\}$ of the droplet moves under a constant velocity flow with $v_n=-1$. Here $s$ is the arc-length parameter
 and $\hat{L}(t)$ is the length of the curve $\hat{\mathcal{L}}(t)$ at time $t$. Notice that we have set $\hat{\mathcal{L}}(t_1)=\mathcal{L}_1$.
If we denote the volume of the droplet as ${A}(t)$ in the process, it is an increasing function with respect to time $t$ such that ${A}(t_1)=A_1$.
The process ends until $t_2=t_1+\delta^*$ such that the liquid volume corresponding to  $\hat{\mathcal{L}}(t_2)$ is equal to that to $\mathcal{L}_0$, namely ${A}(t_2)=A_0=V_0$.
\end{itemize}
\begin{figure}
  \centering
  \includegraphics[width=8cm]{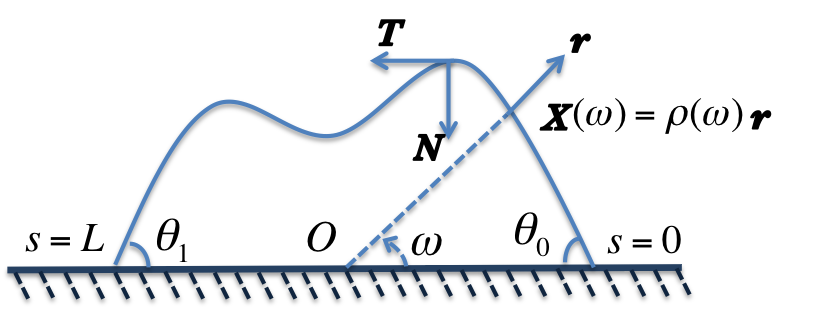}\\
  \caption{The evolving curve with constraints}\label{curve_bc}
\end{figure}

Here we use different notations $\mathcal{L}$ and $\hat{\mathcal{L}}$ for the curve of a droplet in the two different processes.
We will show the total energy decreases after an iteration of Process 1 and Process 2. In the following proof, we further assume the contact angle
of $\hat{\mathcal{L}}(t)$ in Process 2 are also keep fixed so that $\theta_{i}=\theta_Y$ for $i=0,1$. 
 This can be guaranteed when the contact angle is smaller than $90^o$
under {a proper} extension of the level-set function under the solid boundary when we do the {re-distance} step. Otherwise,
the contact angle may deviate slightly from $\theta_Y$ when $\delta^*$ is small.


\subsection{Energy decay property for the geometric flows}
In the following, we will analyse the energy decay property of an iteration of Process 1 and Process 2.
This is based on a careful analysis {of} the properties of the corresponding geometric flows in
the two processes. We  first introduce some results for a general geometric flow for a
 curve with both ends on a substrate.

As shown in Figure~\ref{curve_bc},  we consider a smooth curve on a substrate, which represents the surface of a liquid droplet.
The liquid domain is simply connected so that the curve does not intersect itself. For simplicity,
we assume the curve can be represented by a function in a polar coordinate as $\{\vct X(\omega):0\leq \omega\leq \pi \}$.
The curve  evolves with time and {can be} noted by $\{\vct X (\omega, t)\}$. Its length is given by $ L(t)$.
Notice that the curve can also be represented in an arclength parameter $s$ that $\{\vct X(s,t):0\leq s \leq L(t) \}$.
Then we have $\vct T=\partial_s \vct{X}=$ and $\partial_s \vct T=\kappa \vct N$.
Here $\vct{N}$ and $\vct{T}$ are respectively the unit normal vector and the unit tangential vector on the curve. $\vct N$ is obtained by rotating $\vct T$
for $90^o$ degrees in {the} counter-clockwise direction.
 $\kappa(s)$ is the curvature of
the curve, which is positive when the center of the circle of curvature is in the same direction of $\vct{N}$.
In addition, we have the relation that $\partial_{\omega} \vct X=|\partial_{\omega} \vct X|\vct{T}$
and $\rmd \omega= |\partial_{\omega} \vct X| \rmd s$. Here $|\cdot|$ is the norm of a vector in $\mathbb{R}^2$.

We use $ v_n $ and $ v_s $ to represent the radial and tangential components of velocity respectively. Then the velocity can be written as
\begin{equation*}
  \vct X_t = \vct v = \vct v_n + \vct v_s = v_n \vct N+v_s \vct T.
\end{equation*}
 We are interested in how the length of the curve and the area of the liquid domain (enclosed by the curve and substrate) change when the curve evolves with time.
This is given by the following lemma,
which is adapted from the result for closed curves(see Section 3.2  in \cite{kimmel2003numerical}).
\begin{lemma}\label{lemma_bc}
For an  evolving curve as shown Figure~\ref{curve_bc}, let ${L}(t)$ be the length of the  curve,
and ${A}(t)$ be the area enclosed by the curve and the lower  substrate. Then, we have
\begin{align}\label{e:Lt_ncl}
\frac{\rmd  L(t)}{\rmd t}&=-\int_{0}^{L} \kappa v_n \rmd s+v_x(0)\cos\theta_0- v_x(L)\cos\theta_1,\\
\label{e:At_ncl}
\frac{\rmd  A(t)}{\rmd t}&=-\int_{0}^{L} v_n \rmd s.
\end{align}
where $\theta_0$ and $\theta_1$ are respectively the contact angles at the points corresponding to $s=0$ and $s=L(t)$; $v_x(0)$ and $v_x(L)$
are respectively the {velocities} of the {two end} points in $x$ direction.
\end{lemma}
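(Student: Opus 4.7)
The plan is to use a standard moving-curve calculation: introduce a fixed, time-independent parametrization of $\mathcal{L}(t)$, differentiate in $t$, integrate by parts in the parameter, and then identify the boundary contributions coming from the motion of the two contact points.

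For \eqref{e:Lt_ncl}, I would reparametrize $\mathcal{L}(t)=\{\vct{X}(\omega,t):\omega\in[0,1]\}$ so that $\partial_t$ and $\int\rmd\omega$ commute (avoiding the time-dependent upper limit $L(t)$ that would appear if one worked directly in arc length). Then
\begin{equation*}
L(t)=\int_0^1|\partial_\omega\vct{X}|\,\rmd\omega,\qquad \frac{\rmd L}{\rmd t}=\int_0^1\vct{T}\cdot\partial_\omega\vct{v}\,\rmd\omega,
\end{equation*}
with $\vct{v}=\partial_t\vct{X}$ and $\vct{T}=\partial_\omega\vct{X}/|\partial_\omega\vct{X}|$. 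A single integration by parts in $\omega$ gives a boundary term $[\vct{T}\cdot\vct{v}]_{\omega=0}^{\omega=1}$ plus an interior term; using $\partial_\omega\vct{T}=|\partial_\omega\vct{X}|\,\kappa\vct{N}$ together with $\rmd s=|\partial_\omega\vct{X}|\,\rmd\omega$, that interior term collapses to $-\int_0^{L}\kappa v_n\,\rmd s$, which is already the curvature integral in \eqref{e:Lt_ncl}. The boundary term is the nontrivial ingredient: each contact point is constrained to $\Gamma$, so its velocity is horizontal, $\vct{v}=(v_x,0)$, and the unit tangent at each end is fixed in direction by the local contact angle. Reading off $\vct{T}\cdot\hat{\vct{x}}$ at the two endpoints in terms of $\theta_0$ and $\theta_1$, under the paper's orientation conventions for $\vct{T}$ and $\vct{N}$, the boundary contribution assembles into $v_x(0)\cos\theta_0-v_x(L)\cos\theta_1$.

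For \eqref{e:At_ncl}, I would apply Reynolds' transport theorem to $A(t)=\int_{D(t)}\rmd x$, where $D(t)$ is the droplet bounded above by $\mathcal{L}(t)$ and below by a segment of the fixed substrate $\Gamma$. Because $\Gamma$ is stationary, the substrate portion of $\partial D(t)$ contributes nothing to $\frac{\rmd A}{\rmd t}$: although the two contact points slide along $\Gamma$, that motion is tangential to $\partial D$ there and therefore creates no flux. Hence only $\mathcal{L}(t)$ contributes, and the overall sign is determined by the direction of $\vct{N}$: since rotating $\vct{T}$ by $90^\circ$ counter-clockwise gives the inward normal to $D(t)$, the quantity $v_n=\vct{N}\cdot\vct{v}$ is an \emph{inward} normal velocity, which produces the minus sign in $\frac{\rmd A}{\rmd t}=-\int_0^{L}v_n\,\rmd s$.

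The main obstacle, and essentially the only nontrivial step, is the sign and orientation bookkeeping: pinning down from the figure which endpoint of $\mathcal{L}(t)$ corresponds to $s=0$, the direction in which $\omega$ (and hence $\vct{T}$) increases, whether $\vct{N}$ is inward or outward for $D(t)$, and the sign of $\kappa$ implied by the convention ``$\kappa>0$ when the centre of curvature lies on the $\vct{N}$ side''. A useful sanity check is that these choices must be compatible with the mean-curvature-flow identity $v_n=\kappa$ \emph{shrinking} a convex droplet in Section~5.1. Once the conventions are locked in, both \eqref{e:Lt_ncl} and \eqref{e:At_ncl} reduce to one integration by parts plus one application of the transport theorem.
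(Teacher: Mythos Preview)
Your plan for \eqref{e:Lt_ncl} is essentially the paper's own argument: the paper also fixes a time-independent parameter $\omega$, passes $\partial_t$ through the integral, decomposes $\vct v=v_n\vct N+v_s\vct T$, and uses the Frenet relation $\partial_s\vct N=-\kappa\vct T$ to produce the curvature integral; the boundary contribution arises as $v_s(L)-v_s(0)$, which is then converted via the horizontal-motion constraint $v_s=-v_x\cos\theta$ at each contact point. Your integration-by-parts and the paper's direct expansion of $\langle\partial_s\vct v,\vct T\rangle$ are the same computation organised slightly differently.

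For \eqref{e:At_ncl} you take a genuinely different route. The paper writes the droplet in polar form $\vct X(\omega,t)=\rho(\omega,t)\vct r(\omega)$, differentiates $A(t)=\tfrac12\int_0^\pi\rho^2\,\rmd\omega$, and then rewrites $\rho\,\partial_t\rho$ as the $2$D cross product $[\vct X,\partial_{t\omega}\vct X]$; an integration by parts in $\omega$ produces a boundary term $[\vct X,\vct v]\big|_0^\pi$ that vanishes because both $\vct X$ and $\vct v$ are horizontal at the contact points, and the interior term collapses via $[\vct T,\vct N]=1$ to $-\int_0^L v_n\,\rmd s$. Your Reynolds-transport argument is shorter and more conceptual, and it makes transparent why the substrate segment contributes nothing (the motion there is tangential). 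The paper's polar/determinant computation, on the other hand, is entirely self-contained and does not invoke an external transport identity; it also makes the vanishing of the endpoint contribution an explicit algebraic fact rather than a flux argument. Either approach is fine here, and your sanity check on the sign of $v_n$ via the shrinking of a convex droplet under $v_n=\kappa$ is exactly the right way to lock in the orientation.
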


\begin{proof}
For proof of (\ref{e:Lt_ncl}) we argue as follows. Here we use $\left <\cdot,\cdot\right >$ to donate inner product of the vector in $\mathbb{R}^2$.
\begin{equation*}
\begin{aligned}
 \frac{\rmd}{\rmd t} {L}(t) &=\frac{\rmd }{\rmd t} \int_{0}^{\pi}\left\langle \partial_{\omega } \vct X, \partial_{\omega} \vct X\right\rangle^{1 / 2} \rmd \omega
 =\int_{0}^{\pi } \frac{\left\langle \partial_{t \omega}\vct X, \partial_{\omega}\vct X\right\rangle}{\left\langle \partial_{\omega} \vct  X,\partial _{\omega}\vct X\right\rangle^{1 / 2}} \rmd \omega \\
   &=\int_{0}^{\pi}\left\langle\frac{\partial}{\partial \omega}(v_n \vct{N}+v_s \vct T), \vct{T}\right\rangle \rmd \omega
   =\int_{0}^{\pi}\left\langle\left|\partial_{\omega}\vct X\right| \frac{\partial}{\partial s}(v_n \vct{N}+v_s \vct T), \vct{T}\right\rangle \rmd \omega \\ &=\int_{0}^{L(t)}\left\langle\frac{\partial}{\partial s}(v_n \vct{N}+v_s \vct T), \vct{T}\right\rangle \rmd s\\
   &=-\int_{0}^{L(t)} v_n \kappa \rmd s + \int_{0}^{L(t)} \partial_s v_{s} \rmd s =-\int_{0}^{L(t)} v_n \kappa \rmd s + v_s(L)-v_s(0).
 \end{aligned}
\end{equation*}
In the last second derivation, we have used the geometric equation that $\partial_s\vct N=-\kappa \vct T$.
Since the {two end} points can only move  in  horizontal direction, we have the constraint
\begin{equation}
v_s(L)=-v_x(L)\cos\theta_1, \qquad v_s(0)=-v_x(0)\cos\theta_0.
\end{equation}
Then we have proved the equation (\ref{e:Lt_ncl}).

The proof of (\ref{e:At_ncl}) can  be  done as follows. Notice that the curve can also be written as
$\vct X(\omega,t)=\rho(\omega,t)\vct r(\omega)$,
where $\vct r(\omega)$ is the unit vector in {the} radial direction.
Then we have
$$\partial_\omega \vct X(\omega,t)=\partial_{\omega} \rho(\omega,t) \vct r(\omega)+\rho(\omega,t) \vec{\theta}(\omega),$$
 where $\vec{\theta}(\omega)$ is the unit vector in the angular direction.
Then we have
$$
\partial_{t\omega} \vct X(\omega,t)=\partial_{t\omega} \rho(\omega,t) \vct r(\omega)+\partial_t \rho(\omega,t) \vec{\theta}(\omega).
$$
We define an outer product for vectors  $\vct a=(a_1,a_2)^T,\vct b=(b_1,b_2)^T \in \mathbb{R}^2$ (see Page~22 in \cite{kimmel2003numerical}) as
$$\left[ \vct a,\vct b\right ]:=\det\left (\begin{array}{cc}
a_1 & b_1\\
a_2 & b_2
\end{array}
\right)=a_1b_2-a_2b_1,
$$
which is a bilinear operator for the two vectors.
This leads to
$$
\left[ \vct X(\omega,t), \partial_{t\omega} \vct X(\omega,t) \right]=\rho(\omega,t)\partial_t \rho(\omega,t)\left[  \vct r(\omega) , \vec{\theta}(\omega)\right]
=\rho(\omega,t)\partial_t \rho(\omega,t).
$$
Then we have
\begin{align*}
 \frac{\rmd}{\rmd t} {A}(t) &=\frac{\rmd }{\rmd t} \int_{0}^{\pi}\frac{1}{2}( \rho(\omega,t))^2\rmd \omega
 =\int_{0}^{\pi} \rho(\omega, t)\partial_t \rho(\omega,t) \rmd \omega\\
 & =\int_{0}^{\pi}\left[ \vct X(\omega,t), \partial_{t\omega} \vct X(\omega,t)\right] \rmd \omega =\int_{0}^{\pi}\left[ \vct X(\omega,t), \partial_{\omega} \vct v(\omega,t)\right] \rmd \omega \\
 &=\left[ \vct X(\omega,t),  \vct v(\omega,t)\right]|_0^{\pi}-\int_{0}^{\pi}\left[ \partial_{\omega} \vct X(\omega,t), \vct v(\omega,t)\right] \rmd \omega\\
 &=-\int_{0}^{\pi}\left[|\partial_{\omega}\vct X|\vct T , \vct v(\omega,t)\right] \rmd \omega =-\int_{0}^{L(t)}\left[\vct T , \vct v(\omega,t)\right] \rmd s\\
 &= -\int_{0}^{L(t)} v_n \ds.
\end{align*}
In the derivation, we have used the integration by part, the fact that all the vectors $\vct v(0)$, $\vct v(\pi)$, $\vct X(0)$ and $\vct X(\pi)$
are in the horizontal direction, $\left[ \vct T, \vct T\right] =0$ and $\left[ \vct T, \vct N\right] =1$. We finish the proof of the lemma.
\end{proof}

In  wetting problems, we mainly interested in the total surface energy in a liquid-vapor-solid system, which is defined in \eqref{e:sharpEng1}.
For a droplet as in Figure~\ref{curve_bc}, the wetting energy can be rewritten as
\begin{equation*}
  \mathcal E(t) = \gamma\left ( L(t) -  |\vct X(0)-\vct X(L)|\cos \theta_Y\right ) +C.
\end{equation*}
Here $\gamma=\gamma_{LV}$ and $C=C_0+\frac{\gamma}{2}|\Gamma|\cos\theta_Y$. Here we assume that the contact angle $\theta_Y$ is a constant on the solid surface $\Gamma$.
Applying Lemma \ref{lemma_bc} directly yields,
\begin{lemma}\label{lemma_E}
For a droplet with an evolving surface as shown in Figure~\ref{curve_bc}, the total wetting energy satisfies
\begin{equation}\label{e:Et}
\frac{\rmd \mathcal E(t)}{\rmd t}=-\gamma \int_{0}^{L(t)} \kappa v_n \rmd s+\gamma v_x(0)(\cos\theta_0-\cos\theta_Y)-\gamma v_x(L)(\cos\theta_1-\cos\theta_Y),
\end{equation}
\end{lemma}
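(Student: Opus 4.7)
The plan is to differentiate the given closed-form expression for $\mathcal{E}(t)$ term by term and feed the result of Lemma~\ref{lemma_bc} directly in. Writing $\mathcal{E}(t) = \gamma L(t) - \gamma\cos\theta_Y\,|\vct{X}(0)-\vct{X}(L)| + C$, only the first two terms depend on $t$, so it suffices to compute $\gamma\,\rmd L/\rmd t$ and $-\gamma\cos\theta_Y\,\rmd|\vct{X}(0)-\vct{X}(L)|/\rmd t$ separately.

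The length contribution is handed to us directly by \eqref{e:Lt_ncl}:
\begin{equation*}
\gamma\frac{\rmd L(t)}{\rmd t} = -\gamma\int_0^{L(t)}\kappa v_n\,\rmd s + \gamma v_x(0)\cos\theta_0 - \gamma v_x(L)\cos\theta_1.
\end{equation*}
For the endpoint-separation term, I would use the fact that both contact points lie on the horizontal substrate, so $|\vct{X}(0)-\vct{X}(L)|$ coincides with the (signed) horizontal distance between them. Its time derivative is therefore nothing more than a difference of horizontal velocities at the two endpoints, which by the orientation convention underlying $v_s(0)=-v_x(0)\cos\theta_0$ and $v_s(L)=-v_x(L)\cos\theta_1$ (employed in the proof of Lemma~\ref{lemma_bc}) reads $\frac{\rmd}{\rmd t}|\vct{X}(0)-\vct{X}(L)| = v_x(0)-v_x(L)$.

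Adding the two pieces, the $\pm\gamma\cos\theta_Y\,v_x$ contributions merge with the boundary terms of $\gamma\dot L$ to give the factors $(\cos\theta_0-\cos\theta_Y)$ and $(\cos\theta_1-\cos\theta_Y)$ displayed in \eqref{e:Et}. The whole argument is essentially a one-line computation, so there is no serious obstacle; the only point requiring mild care is to keep the sign convention for the arclength orientation, the angles $\theta_0,\theta_1$, and the outward normal $\vct{N}$ consistent with those already fixed in Lemma~\ref{lemma_bc}, so that the $\cos\theta_Y$ contribution subtracts rather than adds to $v_x(\cdot)\cos\theta_i$.
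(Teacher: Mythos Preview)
Your proposal is correct and follows essentially the same approach as the paper: the paper's proof also applies Lemma~\ref{lemma_bc} directly, computes $\frac{\rmd}{\rmd t}|\vct X(0)-\vct X(L)|=v_x(0)-v_x(L)$, and combines the two contributions so that the $\cos\theta_Y$ terms merge with $\cos\theta_0$, $\cos\theta_1$ to yield \eqref{e:Et}.
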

\begin{proof}
Apply Lemma \ref{lemma_bc} directly to get
\begin{equation*}
\begin{aligned}
   \frac{\rmd}{\rmd t}  L(t) -\frac{\rmd}{\rmd t}  |\vct X(0)-\vct X(L)|  \cos \theta_Y
 &=\left(-\int_{0}^{L} \kappa v_n \rmd s+v_x(0)\cos\theta_0-v_x(L)\cos\theta_1\right)-(v_x(0)-v_x(L))\cos\theta_Y\\
 &=-\int_{0}^{L} \kappa v_n \rmd s+v_x(0)(\cos\theta_0-\cos\theta_Y)-v_x(L)(\cos\theta_1-\cos\theta_Y).
 \end{aligned}
\end{equation*}
\end{proof}

Gathering the results above readily gives the following properties for the geometric flows in Process 1 and Process 2 defined in Section 5.1.
\begin{lemma}\label{corollary_E}
For the geometric flow in  Process 1, in which the normal velocity is equal to the mean curvature($v_n = \kappa$)  and the contact angle is given by Young's angle,
we have
\begin{align}\label{e:mkf_Et}
\frac{\rmd  \mathcal E}{\rmd t} &=-\gamma  \int_0^{L(t)} \kappa^2 \ds,\\
\label{e:mkf_At_E}
   \frac{\rmd A}{\rmd t} &= -2\theta_Y.
\end{align}
For the geometric flow in Process 2, in which the normal velocity is a constant($v_n=-1$), suppose the contact angle is also equal to {Young's} angle  $\theta_Y$,  then we have
\begin{align}\label{e:constf_E}
\frac{\rmd  {\mathcal E}}{\rmd t}   &=
 2\gamma \theta_Y, \\
\label{e:constf_At_E}
   \frac{\rmd {A}}{\rmd t} &= \hat {L}(t).
\end{align}
\end{lemma}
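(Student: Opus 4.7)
My plan is to apply the two general identities of Lemma~\ref{lemma_bc} and Lemma~\ref{lemma_E} to each of the two processes, using the specific velocity laws $v_n=\kappa$ and $v_n=-1$ and the standing assumption that the contact angles remain equal to $\theta_Y$. The contact angle assumption is decisive because it kills the boundary contributions in \eqref{e:Et}: both $v_x(0)(\cos\theta_0-\cos\theta_Y)$ and $v_x(L)(\cos\theta_1-\cos\theta_Y)$ vanish identically, so in each process the rate of change of the wetting energy collapses to the single bulk integral $-\gamma\int_0^{L(t)} \kappa v_n\, ds$.

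For Process~1, substituting $v_n=\kappa$ into this reduced form of Lemma~\ref{lemma_E} immediately gives $\frac{\rmd\mathcal E}{\rmd t}=-\gamma\int_0^{L(t)}\kappa^2\,ds$, which is \eqref{e:mkf_Et}. The area identity then requires evaluating $\int_0^{L(t)}\kappa\,ds$, since \eqref{e:At_ncl} yields $\frac{\rmd A}{\rmd t}=-\int_0^{L(t)}\kappa\,ds$. I will establish the geometric fact
\[
\int_0^{L(t)}\kappa\,ds = \theta_0+\theta_1,
\]
which is a consequence of the paper's sign convention $\partial_s\vct T=\kappa\vct N$ with $\vct N$ the $90^\circ$ CCW rotation of $\vct T$: the signed curvature integrated along the arc equals the total CCW turning angle of $\vct T$, and that turning angle is fixed by the contact angle data at the two endpoints because the contact points move only horizontally. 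With $\theta_0=\theta_1=\theta_Y$ this gives $\int\kappa\,ds = 2\theta_Y$, and hence \eqref{e:mkf_At_E}.

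Process~2 is then essentially bookkeeping. Substituting $v_n=-1$ into \eqref{e:At_ncl} gives $\frac{\rmd \hat A}{\rmd t}=-\int_0^{\hat L(t)}(-1)\,ds=\hat L(t)$, proving \eqref{e:constf_At_E}. For the energy, the same vanishing of boundary terms plus $v_n=-1$ reduces \eqref{e:Et} to $\frac{\rmd\mathcal E}{\rmd t}=\gamma\int_0^{\hat L(t)}\kappa\,ds$, and re-using the total-curvature identity $\int\kappa\,ds = 2\theta_Y$ yields \eqref{e:constf_E}.

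The only real obstacle is verifying the total-curvature identity with the correct sign under the paper's conventions. This requires matching the orientation implicit in the polar parameterization used in the proof of Lemma~\ref{lemma_bc} (so that the outward/inward sense of $\vct N$ is consistent with how $A(t)$ is defined), confirming the sign of $\kappa$ declared after equation~\eqref{e:At_ncl}, and then reading off the tangent angles at the two contact points from the contact angle conditions. Once this geometric identity is in hand, both statements \eqref{e:mkf_Et}--\eqref{e:constf_At_E} follow by direct substitution, so I expect the proof to be short.
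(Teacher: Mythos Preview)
Your proposal is correct and follows essentially the same approach as the paper: the paper's proof simply states that the lemma ``is directly from the previous two lemmas by noticing that $\int_0^{L(t)}\kappa\,\rmd s=2\theta_Y$ and $\int_0^{\hat L(t)}\kappa\,\rmd s=2\theta_Y$ in the two processes.'' Your plan is a more detailed version of exactly this, with the added (useful) care of verifying the total-curvature identity under the paper's sign conventions.
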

\begin{proof}
The lemma is directly from the previous two lemmas by noticing that $\int_0^{L(t)}\kappa \ds=2\theta_Y$
and $\int_0^{\hat{L}(t)}\kappa \ds=2\theta_Y$ in the two processes.
\end{proof}

We are  ready to prove the energy decay property of an iteration of Process 1 and Process 2. For clarity in presentation, we introduce a few more notations. Let $A_0=A(t_0)$, $A_1=A( t_1)$ and $A_2={A}(t_2)$ be the volumes(areas) of the droplet at the beginning of
Process 1, at the end of Process 1(or equivalently the beginning of Process 2) and {the end} of Process 2, respectively. Similarly,
$\mathcal E_i, i=0,1,2$ are respectively the total wetting energies in the corresponding states.

\begin{theorem}
 For a droplet as shown in Figure \ref{curve_bc}, it evolves  by a mean curvature flow as described in Process 1 {in the first stage} ($t_0<t< t_1$), and then correct its volume to initial value $ A_0$ by a constant velocity flow as described in Process 2 in the second stage ($t_1<t< t_2$). If we assume that $\mathcal{L}(t_1)$ is not a circular curve,
 then   the total wetting energy decreases after an iteration of Process 1 and Process 2, namely
\begin{equation}\label{e:energydecay}
  \mathcal E_2  \leq \mathcal E_0,
\end{equation}
when $\delta t$ is small enough.
\end{theorem}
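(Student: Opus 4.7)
The plan is to compute the total energy change $\mathcal{E}_2-\mathcal{E}_0$ explicitly by integrating the formulas from Lemma~\ref{corollary_E} over the two processes, then use the volume-preservation constraint $A_2=A_0$ to eliminate $\delta^*$ in favor of $\delta t$, and finally invoke a Cauchy--Schwarz inequality to conclude negativity.

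First, by integrating \eqref{e:mkf_Et} and \eqref{e:constf_E} in time, I would write
\begin{equation*}
\mathcal{E}_2-\mathcal{E}_0=(\mathcal{E}_1-\mathcal{E}_0)+(\mathcal{E}_2-\mathcal{E}_1)
=-\gamma\int_{t_0}^{t_1}\!\!\int_0^{L(t)}\kappa^2\,\rmd s\,\rmd t\;+\;2\gamma\theta_Y\,\delta^*.
\end{equation*}
Then I would use \eqref{e:mkf_At_E} and \eqref{e:constf_At_E} together with $A_2=A_0$ to determine $\delta^*$. Explicitly, $A_1-A_0=-2\theta_Y\delta t$ from Process~1, while $A_2-A_1=\int_{t_1}^{t_2}\hat L(t)\,\rmd t$ from Process~2, so the constraint $A_2=A_0$ gives the identity
\begin{equation*}
\int_{t_1}^{t_2}\hat L(t)\,\rmd t=2\theta_Y\,\delta t.
\end{equation*}
By the mean value theorem this yields $\delta^*=2\theta_Y\delta t/\hat L(\tau)$ for some $\tau\in[t_1,t_2]$.

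Substituting into the energy identity turns the problem into showing
\begin{equation*}
\int_{t_0}^{t_1}\!\!\int_0^{L(t)}\kappa^2\,\rmd s\,\rmd t\;>\;\frac{4\theta_Y^2\,\delta t}{\hat L(\tau)}
\end{equation*}
for small $\delta t$. Here I would use the fact that $\kappa\,\rmd s$ has total integral $2\theta_Y$ along the curve (since the tangent turns through exactly $\pi-\theta_Y-\theta_Y=\pi-2\theta_Y$ inside the drop when both contact angles equal $\theta_Y$, so the signed curvature integrates to $2\theta_Y$, as was already used in Lemma~\ref{corollary_E}). The Cauchy--Schwarz inequality then gives the crucial pointwise bound
\begin{equation*}
L(t)\int_0^{L(t)}\kappa^2\,\rmd s\;\geq\;\Bigl(\int_0^{L(t)}\kappa\,\rmd s\Bigr)^2=4\theta_Y^2,
\end{equation*}
with equality if and only if $\kappa$ is constant on $\mathcal{L}(t)$, i.e.\ the curve is a circular arc.

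Finally, I would pass to the limit $\delta t\to 0$. Both $L(t)$ (for $t\in[t_0,t_1]$) and $\hat L(\tau)$ converge to $L(t_1)$, so after dividing the inequality by $\delta t$, the desired strict inequality reduces to $L(t_1)\int_{\mathcal{L}(t_1)}\kappa^2\,\rmd s>4\theta_Y^2$, which holds by the assumption that $\mathcal{L}(t_1)$ is not a circular arc and the strict Cauchy--Schwarz inequality above. The main obstacle I expect is making the leading-order approximations of $L(t)$, $\hat L(t)$, and the curvature integrals quantitative enough to justify picking a single $\delta t$ threshold uniformly; a continuity/compactness argument using the smoothness of the two geometric flows should suffice.
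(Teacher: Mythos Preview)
Your proposal is correct and follows essentially the same route as the paper: decompose $\mathcal{E}_2-\mathcal{E}_0$ via Lemma~\ref{corollary_E}, use the volume constraint $A_2=A_0$ to relate $\delta^*$ and $\delta t$, and close with the Cauchy--Schwarz inequality $(\int_0^L\kappa\,\rmd s)^2\leq L\int_0^L\kappa^2\,\rmd s$ at $t_1$ together with a continuity argument for small $\delta t$.

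The only noticeable difference is in how $\delta^*$ is extracted from the volume constraint. You invoke the mean value theorem on $\int_{t_1}^{t_2}\hat L(t)\,\rmd t=2\theta_Y\delta t$ to write $\delta^*=2\theta_Y\delta t/\hat L(\tau)$ for some $\tau\in[t_1,t_2]$. The paper instead computes $\rmd\hat L/\rmd t=2(\theta_Y+\cot\theta_Y)$ explicitly from the fixed contact angle condition, integrates to get $\hat L(t)$ linear in $t$, solves the resulting quadratic for $\delta^*$ exactly, and then rewrites $II=2\gamma\theta_Y\delta^*$ as an integral $\gamma\int_{t_1-\delta t}^{t_1}\tilde g(t)\,\rmd t$ with $\tilde g(t_1)=4\theta_Y^2/L(t_1)$ before comparing integrands. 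Your MVT shortcut is cleaner and bypasses this explicit calculation, at the cost of not identifying the exact $O(\delta t^2)$ correction; either way the leading-order comparison and the final continuity step are identical.
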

\begin{proof}
By Lemma \ref{corollary_E}, we obtain the relation of the wetting energies,
\begin{equation}\label{e:E0_1}
  \mathcal E_0 - \mathcal E_1 = \gamma \int_{t_0}^{t_1}\int_0^{L(t)} \kappa^2 \rmd s \rmd t = \gamma \int_{t_1-\delta t}^{t_1}\int_0^{L(t)} \kappa^2 \rmd s \rmd t =:I,
\end{equation}
\begin{equation}\label{e:E2_1}
  \mathcal E_2 - \mathcal E_1 =2\gamma \theta_Y (t_2-t_1)= 2\gamma \theta_Y \delta^*=:II.
\end{equation}
We need only to prove
\begin{equation}\label{e:energydecay1}
 II \leq I.
\end{equation}

For the volume of the droplet, we have
\begin{align}
&   A_0 -  A_1 =2\theta_Y\delta t, \label{e:A0_1}\\
 &  A_2 -  A_1=\int_{t_1}^{t_2} \hat{L}(t) \rmd t. \label{e:A2_1}
\end{align}
  From the proof of Lemma 2, we know that, for the constant velocity flow in Process 2,
\begin{equation}
 \frac{d \hat{{L}}(t) }{d t}=2\theta_Y+(v_x(0)-v_x(\hat{L}))\cos\theta_Y.\label{e:temp0}
\end{equation}
Notice that $v_x(0)\sin\theta_Y=-v_n=1$ and $v_x(L)\sin\theta_Y=v_n=-1$.
So we have $ \frac{d \hat{{L}}(t) }{d t}=2(\theta_Y+\mathrm{ctan}\,\theta_Y).$
This leads to $\hat{{L}}(t)-\hat{{L}}(t_1)=2(\theta_Y+\mathrm{ctan}\,\theta_Y)(t-t_1) .$
Then the right hand side term of the equation~\eqref{e:A2_1} can be computed as
\begin{align*}
\int_{t_1}^{t_2} \hat{{L}}(t) \rmd t
&=\delta^*\big(\hat{{L}}(t_1)+(\theta_Y+\mathrm{ctan}\,\theta_Y) \delta^*\big).
\end{align*}
Due to the volume correction condition that ${A}_0={A}_2$, we have
\begin{equation}\label{e:E3}
2\theta_Y\delta t =\delta^*\big(\hat{{L}}(t_1)+(\theta_Y+\mathrm{ctan}\,\theta_Y) \delta^*\big).
\end{equation}
This leads to  $\delta^*= \frac{\sqrt{\hat{L}(t_1)^2+8\theta_Y(\theta_Y+\mathrm{ctan}\,\theta_Y) \delta t}-\hat{L}(t_1)}{2(\theta_Y+\mathrm{ctan}\,\theta_Y) }$.
Notice that
 $\hat{{L}}(t_1)$ is the same as ${L}( t_1)$. We have
 \begin{equation}
\delta^*= \frac{\sqrt{{L}(t_1)^2+8\theta_Y(\theta_Y+\mathrm{ctan}\,\theta_Y) \delta t}-{L}(t_1)}{2(\theta_Y+\mathrm{ctan}\,\theta_Y) }.
 \end{equation}
Then we have
\begin{align}
II=2\gamma \theta_Y \delta^*&= \frac{\gamma \theta_Y }{\theta_Y+\mathrm{ctan}\,\theta_Y}
\big(\sqrt{{L}(t_1)^2+8\theta_Y(\theta_Y+\mathrm{ctan}\,\theta_Y) \delta t}-{L}(t_1)\big) \nonumber \\
&=\gamma \int_{t_1}^{ t_1+\delta t}\frac{4\theta_Y^2 }{\sqrt{{L}(t_1)^2+8\theta_Y(\theta_Y+\mathrm{ctan}\,\theta_Y)(t-t_1)}} \rmd t\nonumber \\
&=\gamma \int_{t_1-\delta t}^{ t_1}\frac{4\theta_Y^2 }{\sqrt{{L}(t_1)^2+8\theta_Y(\theta_Y+\mathrm{ctan}\,\theta_Y)(t_1-t)}} \rmd t
=: \gamma\int_{t_1-\delta t}^{ t_1}\tilde{g}(t) \rmd t
\label{e:termII}
\end{align}

It is easy to see that
$$\tilde g(t_1)=\frac{4\theta_Y^2}{L( t_1)}.$$
By Cauchy-Buniakowsky-Schwarz inequality, we have
\begin{equation*}
2\theta_Y=\int_{0}^{L(t_1)}\kappa \ds \leq \Big(\int_{0}^{L(t_1)}\kappa^2 \ds \Big)^{1/2} (L(t_1))^{1/2}.
\end{equation*}
The equality holds only when $\kappa$ is a constant on $\mathcal{L}(t_1)$. This implies
\begin{equation}
\tilde g( t_1)<\int_0^{L( t_1)}\kappa^2\rmd s.
\end{equation}
when $\mathcal{L}(t_1)$ is not a circular curve.
 Since $\tilde g(t)$ is a smooth function with
respect to $t$, we then have
\begin{equation}
\int_{t_1-\delta t}^{t_1} \tilde g(t) dt \leq \int_{t_1-\delta t}^{t_1}  \int_0^{L( t)}\kappa^2\rmd s.
\end{equation}
when $\delta t$ is small enough. This ends the proof of the theorem.
\end{proof}


\section{Numerical examples}

\subsection{Accuracy check}
In this subsection, we will do  numerical experiments to test the convergence rate with respect to $\delta t$ for the diffusion generated
motion method(Algorithm 1).
For that purpose, we consider a liquid drop on a homogeneous planar surface.
It is known that the  problem~\eqref{e:sharpPb1} has an explicit solution.
It corresponds to a circular droplet with the contact angle equal to {Young's} angle on the substrate.
Therefore we can compute the stationary profile of a liquid drop for various Young's angles.

We do tests for several choices of {Young's} angle $\theta_Y$.
In all these tests, we set $\Omega=[0,1]^2$.
We put the initial curve in the middle of the lower boundary of $\Omega$.
When {Young's} angle $\theta_Y$ is not equal to $90 \degree$,
we set the surface of the initial droplet as a semicircle with the radius $R_0 = 0.25$.
When $ \theta_Y = 90 \degree $, the initial droplet is a circular domain with an initial volume $V_0 = \pi/16$
and with two contact points given by $(3/8,0)$ and $(5/8,0)$.
We set $TOL=10^{-10}$ in the algorithm. We find that {Algorithm 1} always converges in the sense that
the tolerance is obtained after some iterations.
In each experiment, we use a uniform $N\times N$ mesh and fix a time step $\delta t$(as listed in Table~\ref{110a}-\ref{70a}), where we set a unit time as $ t_{unit} = 1/64$.
We compute the  error in the following way,
\begin{equation*}
  {Err}_{\infty} := \operatorname{dist}(\mathcal{L}_k,\mathcal{L} )=\sup_{\mathrm{x}\in \mathcal{L}_k} \inf_{\mathrm{y}\in \mathcal{L} }\|\mathrm{x}-\mathrm{y}\|
\end{equation*}
Here $\mathcal{L}_k$ represents the zero level set of the numerical solution $d_k$, $\mathcal{L} $ is
 the circular curve corresponding to the exact solution.
\begin{figure}[!h]
 \centering
  \begin{minipage}[t]{0.39\linewidth}
    \centering
  \includegraphics[width=7cm]{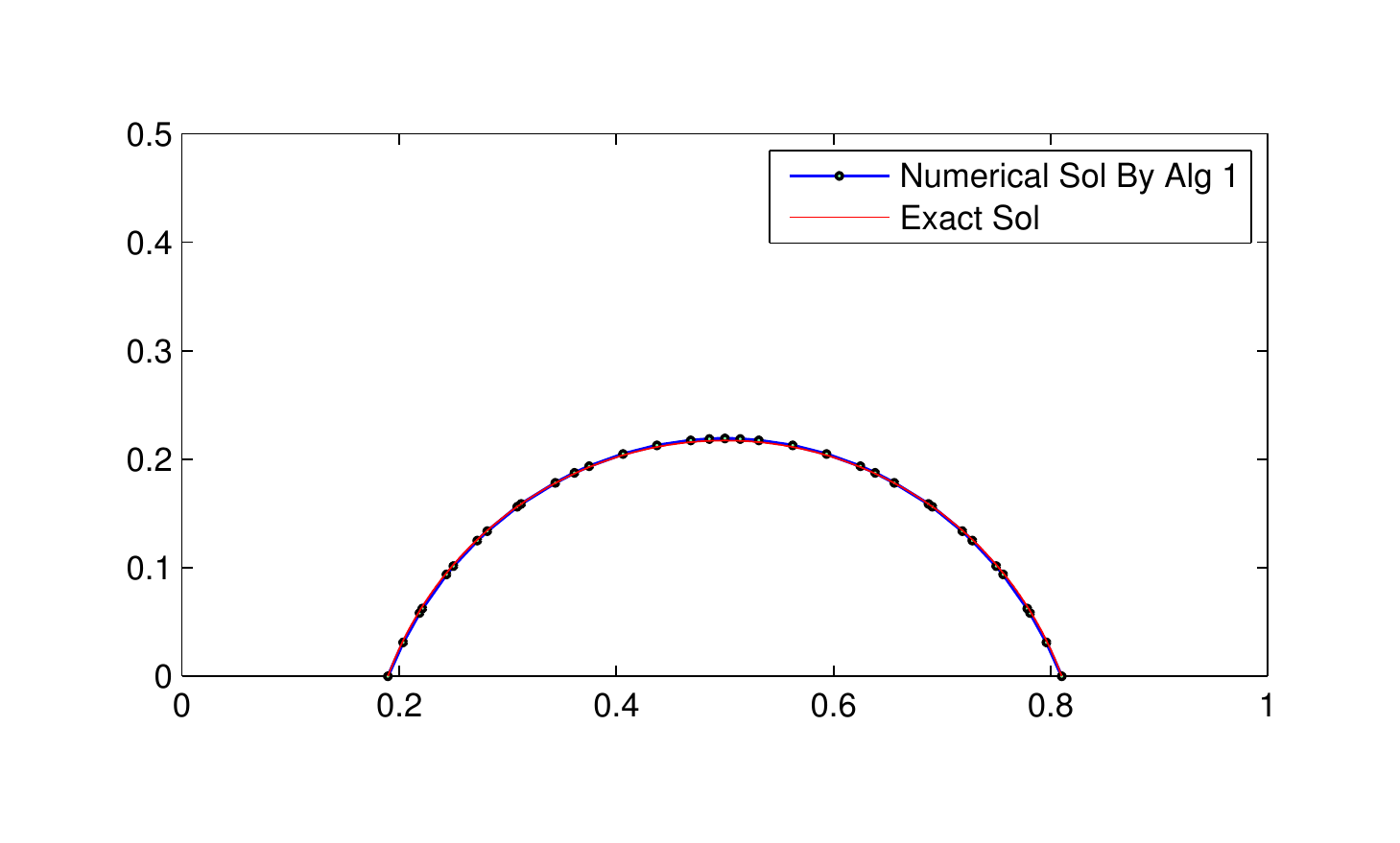}
  \vspace{-1.2cm}
    \caption*{(a)$\theta = 70^o, N=32$.}
    \end{minipage}
      \begin{minipage}[t]{0.39\linewidth}
    \centering
   \includegraphics[width=7cm]{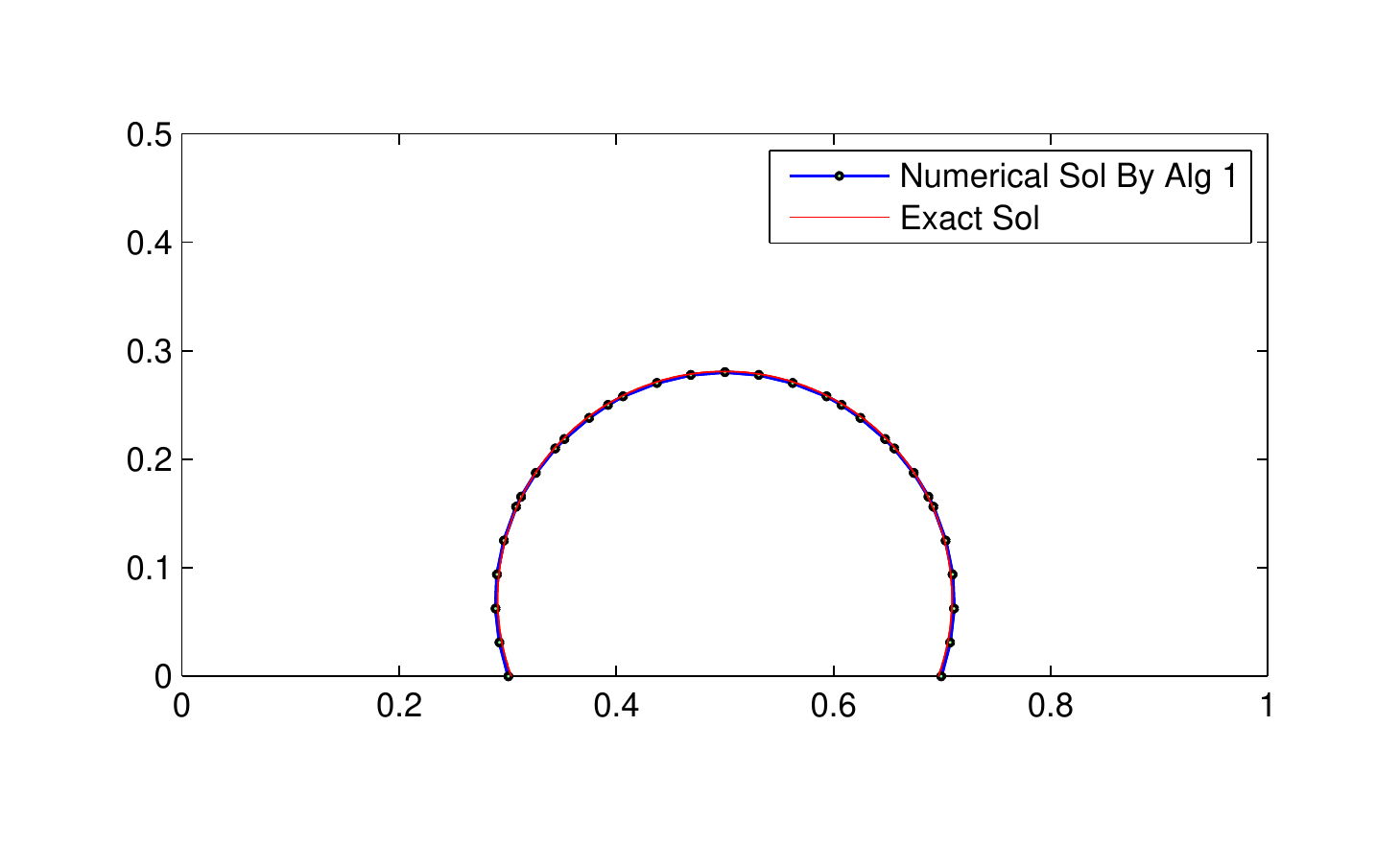}
     \vspace{-1.2cm}
      \caption*{(b)$\theta = 110^o, N=32$.}
    \end{minipage}
  \caption{A comparison between the numerical solution obtained by Algorithm 1 and the exact solution.
   }\label{nmrc}
\end{figure}

Figure~\ref{nmrc} shows  numerical solutions and the corresponding analytical solutions for the cases of $\theta_Y = 70^o, N = 32$ (left) and $\theta_Y = 110 ^ o, N = 32$(right). We can see that our algorithm give accurate results in both cases even on a relatively coarse mesh. More results on numerical errors are shown in Tables~\ref{110a}-\ref{70a}. We decrease the spacial mesh size and time step size proportionally. We can see that the error is of order $O(\delta t)$ when $\theta_Y=110^o,100^o$ and $70^o$. This is optimal as in
the standard diffusion generated method for smooth curves\cite{esedog2010diffusion}.
When $\theta_Y=90^o$, the convergence order is much better. This might be due to the error for time discretization is less dominant in this case, so that the error is of order $O(\delta x^2)$.
In all, the diffusion {generated} motion method gives optimal convergence rate $O(\delta t)$ with respect to the time step for
the wetting problems. This is  better than the
 threshold dynamics method developed in \cite{xuWangWang2016,xuWangWang2018}, where
the  convergence order is of $O(\delta t^{1/2})$, due to the existence of the contact points.

\begin{table}[!h]
\centering
\caption{Numerical errors  of Algorithm 1 for the case $\theta_Y= 110$ \degree}
\label{110a}
\begin{tabular}{cclc}
    \hline
      \hline
 \# Resolution & \# $\delta t$($t_{unit}$) & \# Error & \# Order \\
   \hline
  8$\times$8 & 1/2 & 9.96E-3 & - \\
 16$\times$16 & 1/4 & 5.31E-3 & 0.91 \\
  32$\times$32 & 1/8 & 2.68E-3 & 0.99 \\
   64$\times$64 & 1/16 & 1.67E-3 & 0.68 \\
    128$\times$128 & 1/32 & 7.42E-4 & 1.17 \\
  \hline
\end{tabular}
\end{table}
\begin{table}[!h]
\centering
\caption{Numerical errors  of Algorithm 1 for the case $\theta_Y= 100$ \degree}
\label{100a}
\begin{tabular}{cclc}
    \hline
      \hline
 \# Resolution & \# $\delta t$($t_{unit}$) & \# Error & \# Order \\
   \hline
  8$\times$8 & 1/2 & 1.42E-2 & - \\
 16$\times$16 & 1/4 & 6.03E-3 & 1.24 \\
  32$\times$32 & 1/8 & 2.26E-3 & 1.42 \\
   64$\times$64 & 1/16 & 1.04E-3 & 1.12 \\
    128$\times$128 & 1/32 & 4.58E-4 & 1.18 \\
  \hline
\end{tabular}
\end{table}

\begin{table}[!h]
\centering
\caption{Numerical errors  of Algorithm 1 for the case $\theta_Y= 90$ \degree}
\label{90a}
\begin{tabular}{cclc}
    \hline
      \hline
 \# Resolution & \# $\delta t$($t_{unit}$) & \# Error & \# Order \\
   \hline
  8$\times$8 & 1/2 & 2.39E-2 & - \\
 16$\times$16 & 1/4 & 6.17E-3 & 1.95\\
  32$\times$32 & 1/8 & 1.65E-3 & 1.90 \\
   64$\times$64 & 1/16 & 3.59E-4 & 2.20 \\
    128$\times$128 & 1/32 & {\color{black}6.59E-5} & 2.44 \\
  \hline
\end{tabular}
\end{table}

\begin{table}[!h]
\centering
\caption{Numerical errors  of Algorithm 1 for the case $\theta_Y= 70$ \degree}
\label{70a}
\begin{tabular}{cclc}
    \hline
      \hline
 \# Resolution & \# $\delta t$($t_{unit}$) & \# Error & \# Order \\
   \hline
  8$\times$8 & 1 & 2.86E-2 & - \\
 16$\times$16 & 1/2 & 1.09E-2 & 1.39\\
  32$\times$32 & 1/4 & 4.13E-3 & 1.40 \\
   64$\times$64 & 1/8 & 1.12E-3 & 1.99 \\
    128$\times$128 & 1/16 & 5.92E-4 & 0.92 \\
  \hline
\end{tabular}
\end{table}


\subsection{Wetting on  chemically patterned surfaces}
We  then compute a wetting problem on a chemically pattern solid surface by our algorithm. We assume  the
lower boundary $\Gamma$ is composed of two materials with different Young's angles.
Two typical material distributions are  shown in Figure \ref{k2esC} and \ref{k4esC}.  We use ${\mathcal{A}} $ to represent the red parts and $ {\mathcal{B}}$ to represent the green parts. Their corresponding Young's angles are respectively $\theta_{\mathcal{A}}=2\pi/3$ and $ \theta_{\mathcal{B}}=\pi/3$.

On the chemically patterned surface,
We first give an initial droplet and perform our algorithm until a stable state is reached (i.e. the energy in~\eqref{e:sharpPb1} is minimized).
After that, we increase(or decrease) the volume of the droplet a little by changing the level-set of
the signed distance function {slightly}. The new droplet may not be stable.
We use our algorithm again to compute a stable state.
 By repeating this process, we can observe the advancing(or receding) contact angle and
 the corresponding contact points. The difference between the advancing and receding trajectories gives the
  interesting contact angle hysteresis phenomena. More details will be given below.
%
%

We first consider the  $k = 2$ case, which implies  there are two green patterns in each side of
the middle point on the lower surface as shown in Figure~\ref{k2esC}.
The computational domain $\Omega$ is $(0,1)\times(0,1)$.
We solve this problem on a uniform  $ 256 \times 256 $ spacial mesh and the time step is fixed at $ \delta t = t_{unit}/40$.
In the advancing case, the initial droplet has a circular surface with two contact points $(0.45,0)$ and $(0.55,0) $ which are
located in the central red part. The initial contact angles are $ \theta_1=\theta_2 = \theta_\mathcal{A} = \frac {2} {3} \pi $.
Since the local contact angles equal to {Young's}  angle, the initial state of the droplet is stationary.
We add some volume to the initial state and solve the problem~\eqref{e:sharpPb1} by using {Algorithm 1} until we find
a solution. We repeat the process again and again until the distance $d$ of the contact points from  the middle point
$(0.5,0)$ is larger than $0.35$. In the process, the volume we added in each iteration is determined by the current position of the contact points. To be precise, let $2d$ be the distance between the two contact points, we add a volume of ${\pi d^2}/{20}$. In the receding case, the initial droplet is circular and has two contact points $(0.15,0)$ and $(0.85,0)$ which
are located in the outer red parts on the chemically patterned boundary. We set the initial contact angles as $ \theta_1=\theta_2 = \theta_\mathcal{A} = \frac {2} {3} \pi $. In this case, we decrease the volume gradually.
In each iteration, the volume we {decrease} is computed similarly as in the advancing case. We repeat this process until $ d< 0.05 $.

Figure \ref{k2esC} shows the position of the contact points(its distance $d$ away from the middle point) and the contact angle $\theta$ with respect to the volume of the droplet.  It is clear that  contact angle hysteresis phenomena
occur during the processes.  Both the trajectories of the position of the contact point and the contact angles
are different for the advancing(increasing volume) and receding(decreasing volume) cases.
The hysteresis occurs near some joint points of  the two materials. The largest advancing contact angle is equal to $\theta_\mathcal{A}$
and the smallest receding contact angle is equal to $\theta_\mathcal{B}$.
The results are consistent with the analytical analysis in \cite{xuwang2011}.
Figure~\ref{k2esC} shows the drop profiles for different volumes in the advancing case.
We see clearly the stick-slip phenomena of the contact points. In the receding case, the stick-slip behaviour of the droplet  is similar to the advancing case.
\begin{figure}[!h]
  \centering
  \includegraphics[width=7cm]{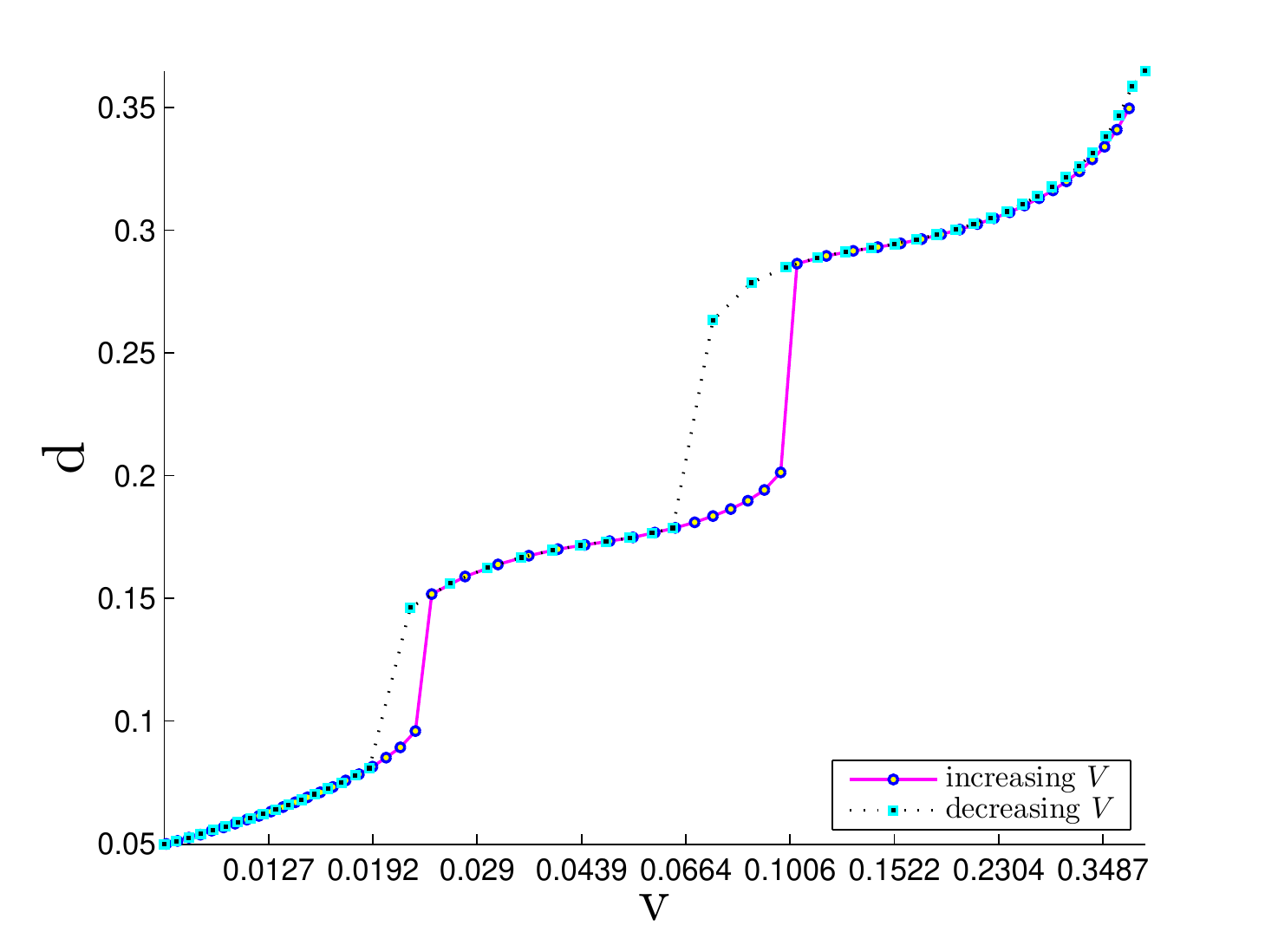}\qquad
  \includegraphics[width=7cm]{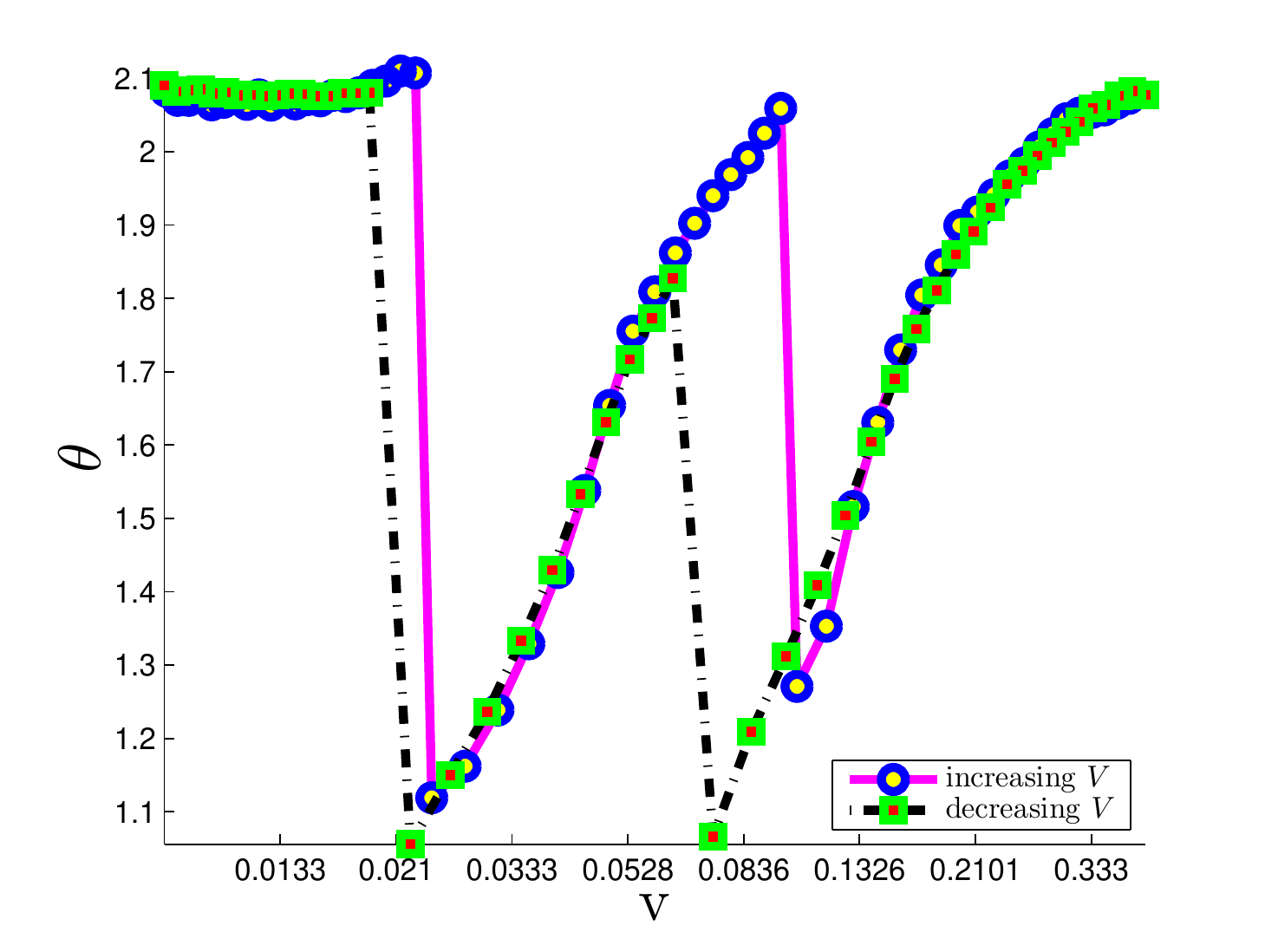}
  \caption{Contact angle hysteresis and constact line slipping($k=2$). Contact point (left) moved and contact angles (right) varied by the changing of drop volume.}\label{k2esmerge}
\end{figure}

\begin{figure}[!htp]
    \centering
    \includegraphics[height=7cm]{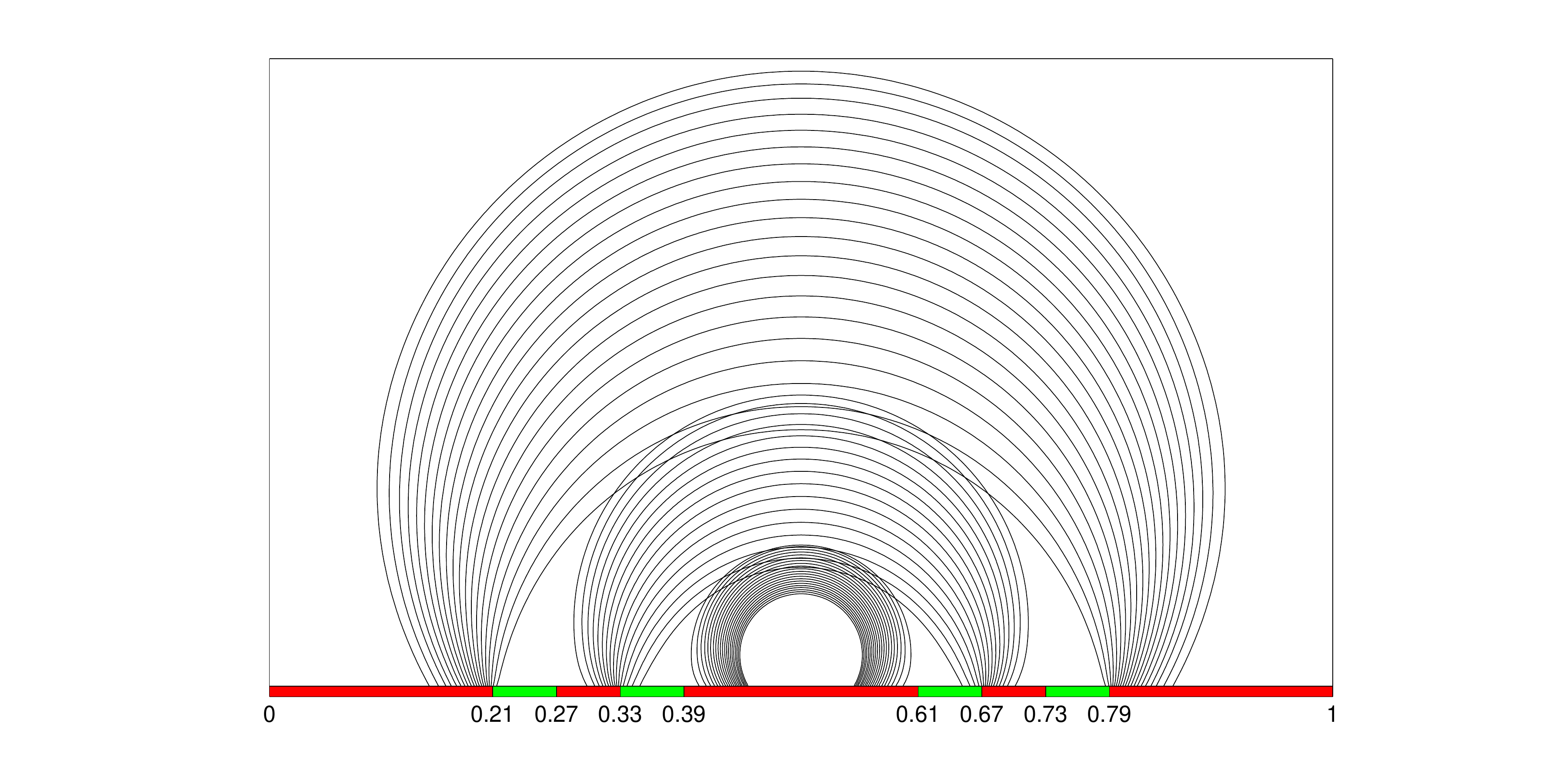}
\vspace{-0.5cm}
    \caption{The drop profiles with increasing volume when ${k=2}$.}
    \label{k2esC}
\end{figure}

In the  $ k = 4 $ case, the patterned surface is shown as in Figure~\ref{k4esC}.
Since there are more patterns on the lower boundary of $\Omega=(0,1)\times(0,1)$, we need finer meshes
to resolve the chemical patterns. In our experiments,
we adopt a $ 512 \times 512 $ spacial mesh and the time step is chosen as $ \delta t = t_{unit}/80$.
We do similar computations as in the $k=2$ case. The only difference is that the volume of the droplet
 changes more slowly. Suppose the current droplet has a circular surface with  two contact points, which are  of distance $d$ away from the middle point of the lower boundary. The volume we add (decrease) in the advancing(receding) case is $\pi d^2/40$.

In Figure \ref{k4esmerge} and  Figure \ref{k4esC}, we draw the pictures of $d$ and $\theta$ as {functions} of
the volume $v$ and the profiles of drops for the case $k=4$.
The results are quite similar to the case $k=2$. We observe the hysteresis in the trajectories of the
 contact angle and contact positions. There are also clear stick-slip phenomena of
 the contact points in the profile of the droplet. Since we have more patterns in this distribution of the materials on
 the substrate, we can see more stick-slip phenomena  on the joint points of the two materials.

\begin{figure}[!h]
  \centering
  \includegraphics[width=7cm]{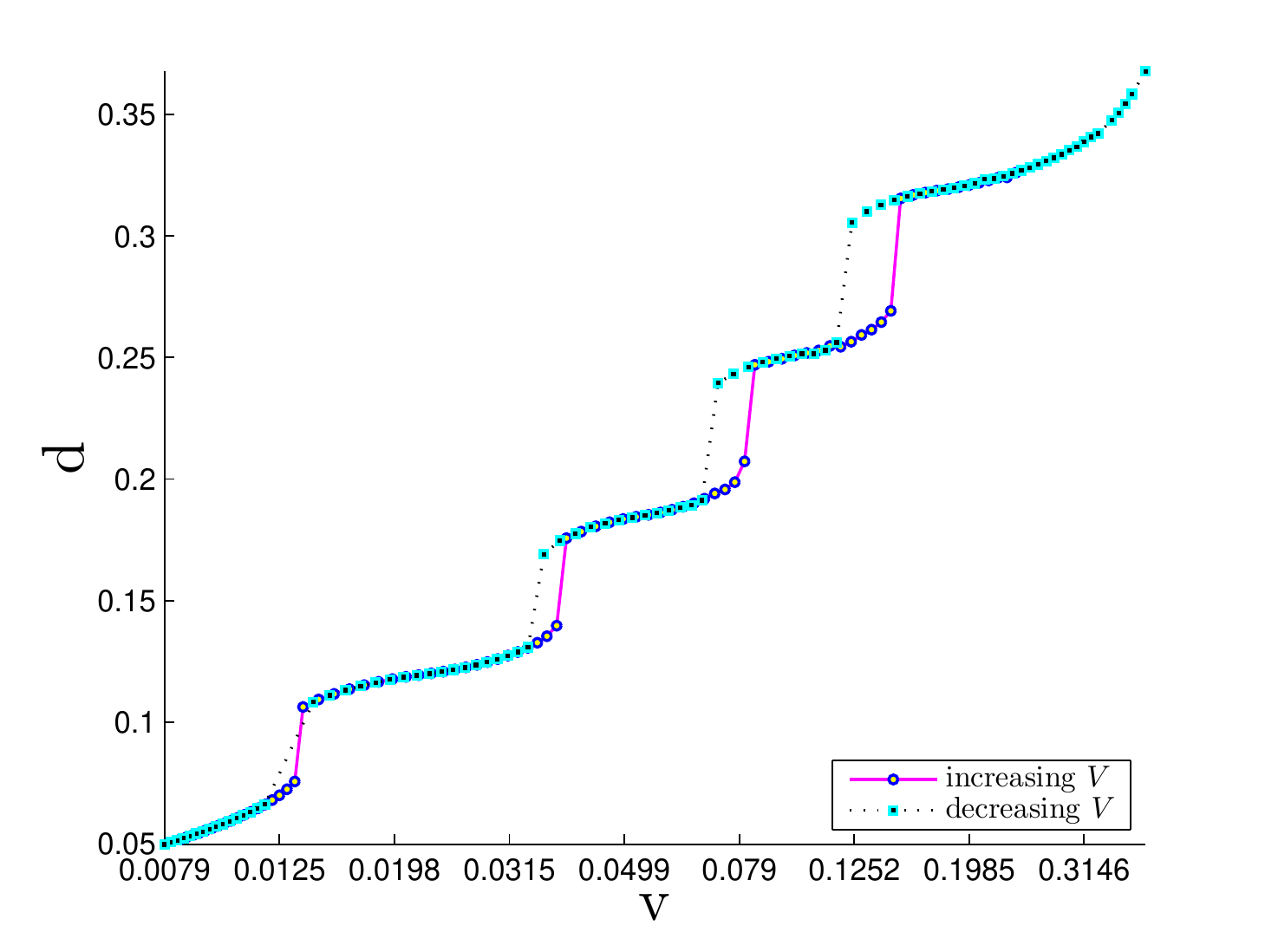}\qquad
  \includegraphics[width=7cm]{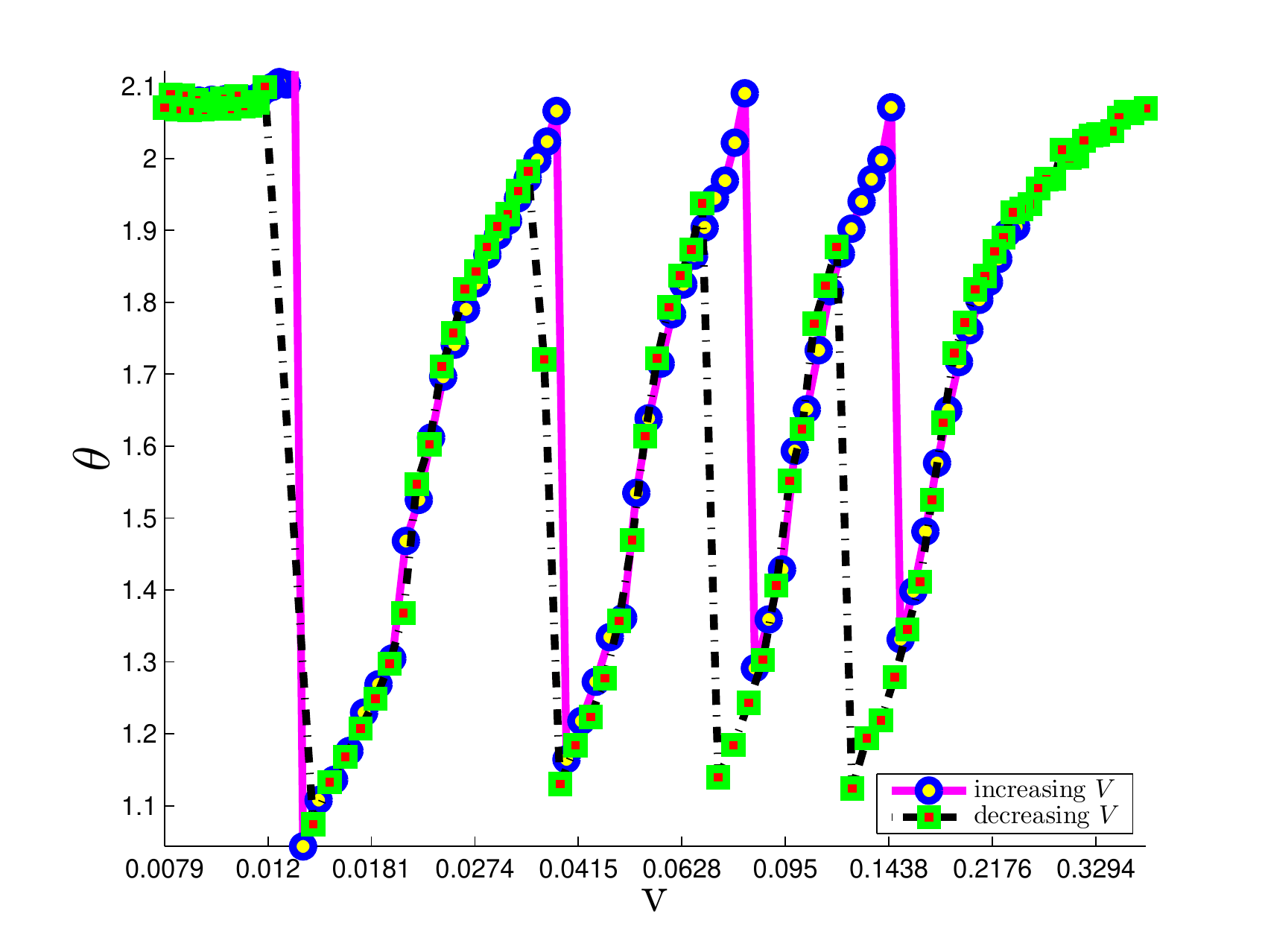}
  \caption{Contact angle hysteresis and constact line slipping($k=4$). Contact point (left) moved and contact angles (right) varied by the changing of drop volume.}\label{k4esmerge}
\end{figure}

\begin{figure}[!htp]
    \centering
    \includegraphics[height=7cm]{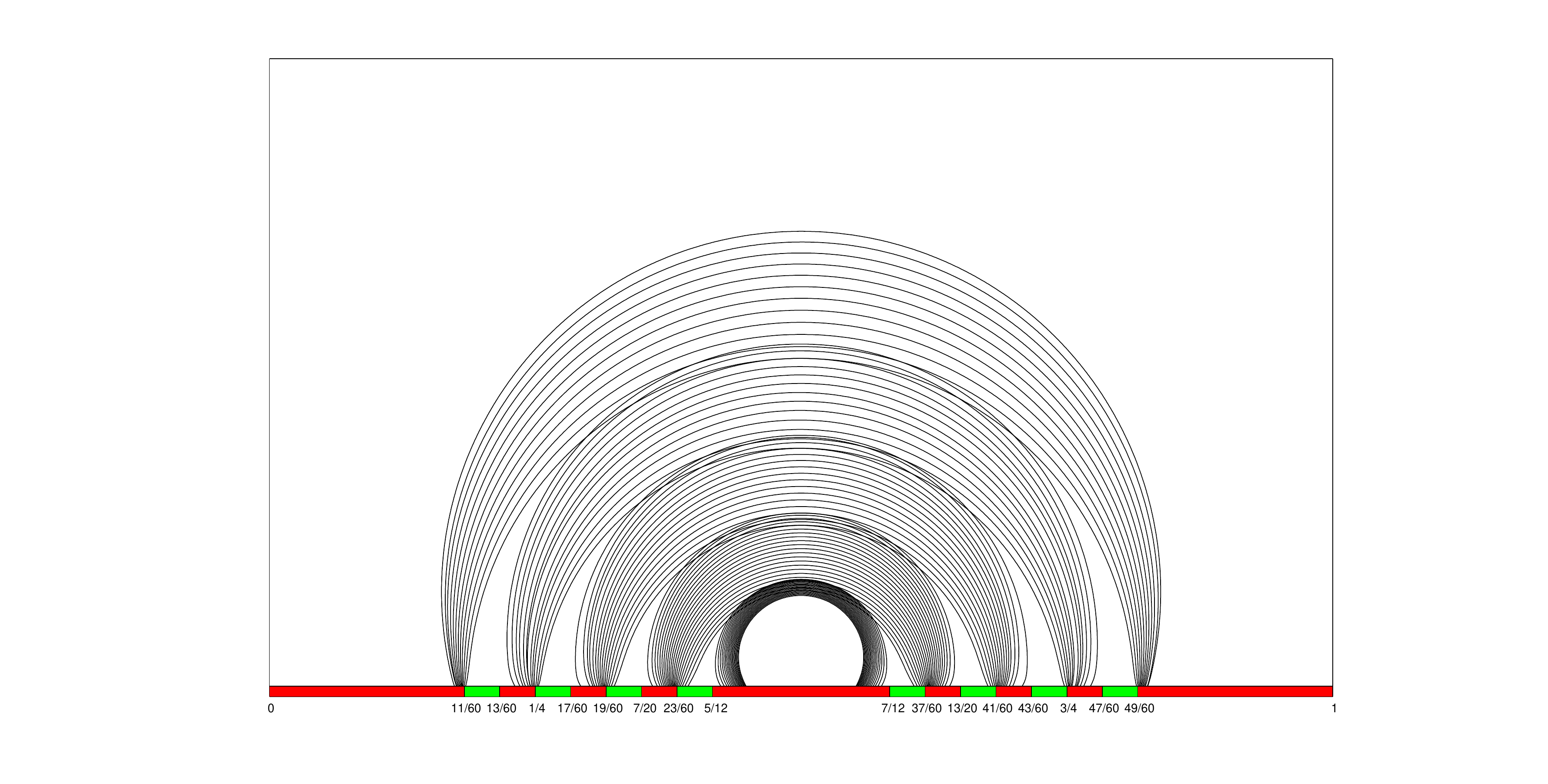}
\vspace{-0.5cm}
    \caption{The drop profiles with increasing volume in the $k=4$ case.}
    \label{k4esC}
\end{figure}

\section{Conclusions and Further discussions}
By using the Onsager principle as an approximation tool, we develop a new diffusion generated motion method for {the wetting problem}.  The key idea is to consider the leading order approximation of a  modified Allen-Cahn equation with a nonlinear relaxed boundary condition on {a solid surface}.
By assuming the leading order has a {\it tanh} profile, we derive a linear diffusion equation
for the signed distance function of the liquid-vapor interface.
The equation is much simpler than the original phase field model and can act as a basis
 to construct our numerical method.

In the proposed method,  we use the signed distance function to represent the interface between the liquid and vapor surface.
In each iteration, only a linear diffusion equation with a  linear boundary
condition is solved, in addition to  a re-distance step and a
volume correction step.
Numerical experiments show that the method has a convergence rate of $O(\delta t)$, which
is much better than the previous threshold dynamics method for wetting problems\cite{xuWangWang2016},
where one can only obtain a half order $O(\delta t^{1/2})$ accuracy.
The method can be seen as a generalization of the standard  diffusion generate motion
method using a signed distance function for mean curvature flows\cite{esedog2010diffusion}.
The energy stability of the method is analysed by careful studies for some geometric flows on
the substrates. Numerical results show that the method works well for wetting on inhomogeneous
surfaces.

In this paper, we focus on  {two-dimensional} problems. Our method can be  generalized to {three-dimensional} cases directly.
For three dimensional problems, it is very helpful to use an adaptive spacial mesh
to decrease the computational {complexity} as in \cite{xuying2020}.

\section*{Acknowledgement}
The work was partially supported by
NSFC 11971469 and by the National Key R\&D Program of China under Grant 2018YFB0704304 and Grant 2018YFB0704300.

\section*{References}
\bibliographystyle{unsrt}
\bibliography{onsager}
\end{document}